\newcommand{\bfR}{\mathbb{R}}
\newcommand{\mcA}{\mathcal{A}}
\newcommand{\mcM}{\mathcal{M}}
\newcommand{\M}{M_k}
\newcommand{\mcD}{\mathcal{D}}
\newcommand{\mcG}{\mathcal{G}}
\newcommand{\mcGx}{\mathcal{G}_x}
\newcommand{\mcGv}{\mathcal{G}_v}
\newcommand{\mcR}{\mathcal{R}}
\newcommand{\mcX}{\mathcal{X}}
\newcommand{\mcV}{\mathcal{V}}
\newcommand{\mcY}{\mathcal{Y}}
\newcommand{\mfV}{\mathfrak{V}}
\newcommand{\mcB}{\mathcal{B}}
\newcommand{\alt}[1]{#1}
\newtheorem{theorem}{Theorem}[section]
\newtheorem*{theorem*}{Theorem}
\newtheorem{corollary}[theorem]{Corollary}
\newtheorem{lemma}[theorem]{Lemma}
\theoremstyle{definition}
\newtheorem{remark}{Remark}[section]
\title{Scattering and Asymptotic Behavior of Solutions to the Vlasov-Poisson System in High Dimension}
\author{Stephen Pankavich}
\address{Department of Applied Mathematics and Statistics, Colorado School of Mines, Golden, CO 80401.}
\email{pankavic@mines.edu}
\date{\today}
\thanks{The author was supported in part by NSF grants DMS-1911145 and DMS-2107938.}
\begin{document}
\maketitle

\begin{abstract}
We consider the repulsive Vlasov-Poisson system in dimension $d \geq 4$.
A condition on the decay rate of the associated electric field is presented that guarantees the scattering and determination of the complete asymptotic behavior of large data solutions as $t \to \infty$.
More specifically, we show that under this condition the spatial average of the particle distribution function converges, and we establish the precise asymptotic profiles of the electric field and macroscopic densities.
An $L^\infty$ scattering result for the particle distribution function along the associated trajectories of free transport is also proved.
Finally, we construct small data solutions that display this asymptotic behavior. These solutions do not require smallness of $\|f_0\|_\infty$ or derivatives, as only a condition on integrated moments of the distribution function is imposed.
\end{abstract}

\section{Introduction}
We consider the electrostatic Vlasov-Poisson system with $t \geq 0$ and $x,v \in \mathbb{R}^d$, namely
\begin{equation}
\tag{VP}
\label{VP}
\left \{ \begin{aligned}
& \partial_{t}f+v\cdot\nabla_{x}f+ E \cdot\nabla_{v}f=0\\
& \rho(t,x)=   \int_{\mathbb{R}^d} f(t,x,v) \,dv\\
& E(t,x) = \nabla_x ( \Delta_x)^{-1} \rho(t,x) =\frac{1}{d\omega_d} \int_{\mathbb{R}^d} \frac{x-y}{\vert x - y \vert^d} \rho(t,y) \ dy
\end{aligned} \right .
\end{equation}
where $\omega_d$ represents the volume of the unit ball in $\mathbb{R}^d$.
Here, the particles are distributed in phase space at time $t \geq 0$ according to the function $f(t,x,v)$ 
and the initial distribution is given by $f(0,x,v) = f_0(x,v)$.
Additionally, $E(t,x)$ represents the electric field induced by the charged particles, $\rho(t,x)$ is the charge density,
and the current density is defined by
$$j(t,x) =   \int v f(t,x,v)  \ dv.$$
For simplicity, we have taken only a single species of charge and normalized the particle mass.
Assuming $f_0 \in L^1(\bfR^{2d})$, the solution remains integrable in phase space as the total charge is conserved in time, namely
$$ \iint f(t,x,v) \ dvdx =  \iint f_0(x,v) \ dvdx =: \mcM.$$
Furthermore, given smooth initial data \eqref{VP} has been shown to possess a smooth global-in-time solution \cite{LP, Pfaf, Schaeffer} for $d=3$, \alt{though such results have yet to be successfully extended to $d \geq 4$}. These global existence theorems depend upon either the propagation of higher (spatial, velocity, or transported) moments or precise estimates on the growth of the characteristics associated to \eqref{VP}, which are defined by
\begin{equation}
\label{char}
\left \{
\begin{aligned}
&\dot{\mcX}(t, \tau, x, v)=\mcV(t, \tau, x, v)\\
&\dot{\mcV}(t, \tau, x, v)=  E(t, \mcX(t, \tau, x, v))
\end{aligned}
\right.
\end{equation}
with initial conditions
$\mcX(\tau, \tau, x, v) = x$ and
$\mcV(\tau, \tau, x, v) = v.$
%
For additional background, we refer the reader to \cite{Glassey, Rein} as general references concerning \eqref{VP} and associated kinetic equations. 

Though the well-posedness of solutions to \eqref{VP} has been thoroughly studied, their time asymptotic behavior is less understood. Partial results concerning the asymptotic growth or decay of quantities in the system are known in some situations, including small data \cite{BD, Flynn, HRV, Pausader, Smulevici}, monocharged and spherically-symmetric data \cite{BCP1, Horst, Pankavich2020}, and lower-dimensional ($d=1,2$) settings \cite{BKR, BMP, GPS, GPS2, GPS4, Sch}.
In particular, these results provide either time asymptotic growth estimates of characteristics or decay estimates of the electric field and charge density.
We specifically note that small-data solutions for $d \geq 4$ were constructed in \cite{Smulevici} using vector field methods, and many of those constructed in three dimensions \cite{BD, Schaeffer21} can be extended to higher dimensions as well. While these solutions are shown to achieve sharp decay rates of the field and charge density, the asymptotic limits of such quantities are not determined. Our results remedy this issue and will apply immediately to those solutions, but we will also construct small data solutions with less restrictive smallness assumptions that further display the sharp large time behavior and asymptotic limits of the field, density, and the particle distribution. 

One generally expects that the field and charge density tend to zero as $t \to \infty$ like $t^{1-d}$ and $t^{-d}$, respectively, for all smooth solutions of \eqref{VP} due to the dispersive properties induced within the system by the transport operator $\partial_t + v \cdot \nabla_x$, the repulsive force generated by the electric field, and the velocity averaging inherent to these quantities. In fact, it is known that the Cauchy problem does not possess smooth steady states (cf., \cite{GPS5}).
%
That being said, it remains a longstanding open problem to demonstrate that for some $d \in \mathbb{N}$ \emph{all} smooth solutions of \eqref{VP} satisfy these decay properties or scatter to a profile along the trajectories generated by the (possibly modified) free transport operator as $t \to \infty$. 
Further evidence has been provided indicating that this behavior should be likely to occur in higher dimensions. Indeed, under the assumption of neutrality the well-known phenomenon of Landau Damping \cite{VM} has been shown to occur for any $d \in \mathbb{N}$, and even without neutrality, the dispersive effects within the Vlasov equation are expected to dominate the influence of the force field in higher dimensions more so than in lower dimensions \cite{BMP, Pankavich2021}.
Admittedly, the physical significance of \eqref{VP} becomes less obvious when posed in higher spatial and momentum dimensions, but understanding the inherent properties of the system and the behavior of solutions for $d \geq 4$ may lead to greater insights concerning the three-dimensional problem or its lower-dimensional analogues.
Hence, the goal of the current work is to establish a precise condition on the decay of the electric field that allows one to identify and establish the precise large-time behavior of any solution to \eqref{VP}, and also construct small data solutions that display exactly this asymptotic dynamic.

\subsection{Overview and Organization}

As we are primarily concerned with large time estimates, we use the notation
$$A(t) \lesssim B(t)$$
to represent the statement that there is $C > 0$
such that
$A(t) \leq CB(t)$
for all $t\geq 0$.
In this vein, we will often use the bracket notation
$$\langle u \rangle = \sqrt{1 + |u|^2}$$
for any $u \in \bfR^d$.
When necessary, $C$ will denote a positive constant (independent of the solution) that may depend upon dimension $d \geq 4$, $\alpha \geq 0$ (fixed below), and initial data and can change from line to line.
Throughout we take $f_0 \in C_0^1(\bfR^{2d})$, which represents the space of continuously differentiable functions that tend to zero as $|x| \to \infty$, so that we may consider smooth solutions, and let $f(t,x,v)$ denote the corresponding $C_0^1$ solution of \eqref{VP} launched by $f_0$.
Additionally, we take the initial distribution to be nonnegative, i.e. $f_0(x,v) \geq 0$, which is a property well known to be maintained in time by the solution.
Unlike \cite{Pankavich2021} we do not assume compactly-supported initial data, and instead propagate translated spatial moments of the distribution function in time.
In addition to the spatial decay as $|x| \to \infty$, the regularity assumptions on initial data could possibly be altered to arrive at similar convergence results in weaker topologies (see \cite{Pausader} for $d=3$).
Still, we will require only $C^1$ initial data rather than higher derivatives in $L^1$ or $L^2$ (as for vector field and harmonic methods) and can address all dimensions $d \geq 4$ simultaneously. 
%


\subsection{Main Results}
Fixing $\alpha \geq 0$, we define for any $t \geq 0$ the $k$th transported moment of $f(t)$ by
$$\M(t) = \iint |x - v(t+\alpha)|^k f(t,x,v) \ dv dx. $$
Our results can be summarized within three theorems.
First, we show that a sufficiently rapid rate of decay for the electric field implies the expected dispersive decay rate and uniformly bounded moments.
\begin{theorem}[Improved Decay]
\label{T0}
Let $d \geq 4$. Assume $M_n(0) < \infty$ for some $n > d(d-1)$ and $\alpha \geq 0$,
and 
\begin{equation}
\label{Assumption}
\tag{A}
\|E(t) \|_\infty \lesssim (t+\alpha)^{-a}
\end{equation}
for some $a > \tilde{a}(d) := \frac{d^2-d-4}{d^2-2d-2}$.
Then, 
$$\|E(t) \|_\infty \lesssim (t+\alpha)^{1-d} \qquad \mathrm{and} \qquad M_k(t) \lesssim 1$$
for all $0 \leq k \leq n$.
\end{theorem}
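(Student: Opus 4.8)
The plan is to reduce the field estimate to a density estimate and then run a bootstrap that upgrades the decay rate of $\|E(t)\|_\infty$ from the assumed exponent $a$ toward the sharp value $d-1$. The reduction rests on the elementary splitting of the Newtonian kernel into its near- and far-field parts, which after optimizing the splitting radius yields the interpolation inequality $\|E(t)\|_\infty \lesssim \mcM^{1/d}\,\|\rho(t)\|_\infty^{(d-1)/d}$; since $\mcM = \|\rho(t)\|_1$ is conserved, proving $\|E(t)\|_\infty \lesssim (t+\alpha)^{1-d}$ is equivalent to establishing the sharp dispersive bound $\|\rho(t)\|_\infty \lesssim (t+\alpha)^{-d}$. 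The two conclusions of the theorem are thereby coupled: bounded moments feed the density decay, the density decay feeds the field decay, and the field decay in turn controls the moments.

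First I would record the moment propagation generated by (A). Differentiating the transported displacement $g(s) := \mcX(s,t,x,v) - \mcV(s,t,x,v)(s+\alpha)$ along the characteristics \eqref{char} gives $\dot g(s) = -(s+\alpha)E(s,\mcX(s))$, so that $|x - v(t+\alpha)| \le |g(0)| + \int_0^t (s+\alpha)\|E(s)\|_\infty\,ds$. Under (A) the integrand is $\lesssim (s+\alpha)^{1-a}$; the crucial qualitative consequence of $a > \tilde a(d) > 1$ is that the total velocity deflection $\int_0^\infty \|E(s)\|_\infty\,ds$ is finite, so velocities along trajectories converge. Integrating the displacement bound against $f_0$ controls $\M(t)$, and once the field is known to decay at the sharp rate (for which the effective exponent exceeds $2$ in every dimension $d\geq 4$) the weight $(s+\alpha)^{1-a}$ becomes integrable and $\M(t)\lesssim 1$ follows for all $0\le k\le n$.

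Next I would prove the dispersive decay of $\rho$. Using that $\|f(t)\|_\infty = \|f_0\|_\infty$ is conserved along the measure-preserving flow, I would estimate $\rho(t,x)$ by separating, at fixed $x$, the velocities with $|x - v(t+\alpha)| \le R$ from the rest: the former occupy a set of measure $\lesssim R^d (t+\alpha)^{-d}$, while the latter are controlled by the pointwise moment $\int |x-v(t+\alpha)|^k f\,dv$ via Chebyshev. Optimizing $R$ produces a density bound that improves with the moment order, and pushing $k$ up to $n > d(d-1)$ together with the change of variables $v \mapsto \mcX(0,t,x,v)$, whose velocity-Jacobian should be $\approx (t+\alpha)^d$, upgrades this to the full $\|\rho(t)\|_\infty \lesssim (t+\alpha)^{-d}$. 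The Newtonian interpolation then returns $\|E(t)\|_\infty\lesssim(t+\alpha)^{1-d}$, and the loop is closed by a continuity argument in which the improved field decay is reinserted into the moment and density estimates.

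The hard part will be controlling the deviation of the characteristics from free transport well enough to pin $\partial_v \mcX(0,t,x,v)$ near $-(t+\alpha)I$; this requires estimating $\int_0^t (s+\alpha)\,\|\nabla_x E(s)\|_\infty$ along trajectories, and $\nabla_x E$ is a Calderón--Zygmund transform of $\rho$ that fails to be bounded on $L^\infty$, so it must be handled through $\|\rho\|_\infty$ and the propagated moments with the attendant borderline losses. The threshold $\tilde a(d) = \frac{d^2-d-4}{d^2-2d-2}$ is precisely the value at which the exponent balance in the bootstrap becomes favorable: the decay gained from dispersion must strictly dominate the growth of the correction terms so that each iteration improves the field exponent and the scheme converges to the sharp rates. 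I expect its explicit form to emerge from optimizing the interpolation exponent, namely the choice of moment order $k \le n$, which is what forces the requirement $n > d(d-1)$, against the dispersive gain; verifying that this balance indeed closes for all $d\ge 4$ is the crux of the argument.
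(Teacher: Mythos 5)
Your overall bootstrap skeleton---field decay controls moment growth, moments control the field via a kernel-splitting interpolation, iterate until the sharp rate $(t+\alpha)^{1-d}$ is reached---is the same as the paper's, and your moment propagation along the transported characteristics is essentially the paper's Lemma \ref{ME}. The genuine gap is in your central reduction: you claim the field bound is equivalent to $\|\rho(t)\|_\infty \lesssim (t+\alpha)^{-d}$ and propose to prove that density bound using the conserved $\|f\|_\infty$, \emph{pointwise} velocity moments, and the change of variables $v \mapsto \mcX(0,t,x,v)$ with velocity Jacobian $\approx (t+\alpha)^d$. This route does not close under the hypotheses of Theorem \ref{T0}. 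The theorem assumes only the \emph{integrated} moment $M_n(0) < \infty$, and $\sup_x \int |x-v(t+\alpha)|^k f\, dv$ is not controlled by $M_k(t)$; meanwhile, the Jacobian route requires bounding $\int_0^t (s+\alpha)\|\nabla_x E(s)\|_\infty\, ds$, hence derivative estimates on $\rho$ and $f$, which the paper develops only in Lemma \ref{DEf}, under the \emph{additional} hypothesis $\langle x - \alpha v\rangle^p f_0 \in W^{1,\infty}$ of Theorem \ref{T1}, and only \emph{after} the sharp field decay is already in hand. During the bootstrap, when the field decays only like $(t+\alpha)^{-a}$ with $a$ near $1$ and moments may grow, your scheme is circular: $\nabla_x E$ is estimated through $\|\rho\|_\infty$ and $\|\nabla_x \rho\|_\infty$, which need the Jacobian control, which needs $\nabla_x E$. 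You flag this as ``the hard part'' but do not resolve it, and it is precisely the obstruction the paper is structured to avoid.

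The paper's proof of Theorem \ref{T0} never touches $\|\rho\|_\infty$. Lemma \ref{LE} is applied with the exponents $p = \frac{d+2}{d}$ and $q = \frac{d+n}{d}$ (the requirement $q > d$ is exactly where $n > d(d-1)$ enters), and Lemma \ref{Lrho} shows that these $L^p$ norms of $\rho$---unlike $\|\rho\|_\infty$---are controlled by integrated moments: one integrates the pointwise Chebyshev bound in $x$ instead of taking a supremum. The second ingredient you are missing is the Illner--Rein identity (Lemmas \ref{IR} and \ref{mompot}), which uses repulsivity and $d \geq 4$ to give the a priori bounds $M_2(t) \lesssim 1$ and $\|E(t)\|_2 \lesssim (t+\alpha)^{-1}$; this anchors the $L^{(d+2)/d}$ side of the interpolation and is what produces the stated threshold $\tilde{a}(d) = \frac{d^2-d-4}{d^2-2d-2}$ as the fixed point of the bootstrap map. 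If one interpolates instead against $\|\rho\|_1 = \mcM$ (the only substitute available without Lemma \ref{mompot}), the same iteration closes only for $a$ above a strictly larger threshold (approaching $\frac{d-1}{d-2}$ as $n \to \infty$, e.g.\ $3/2$ versus $4/3$ for $d=4$), so the theorem as stated would not be recovered. In short: your moment-propagation half is sound, but the density half must be replaced by the $L^p$-interpolation-plus-Illner--Rein mechanism, both because your version needs hypotheses the theorem does not grant and because the quantitative threshold depends on it.
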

We note that $\tilde{a}(d)$ is decreasing for $d \geq 4$ with $\tilde{a}(d) \to 1$ as $d \to \infty$. 
Thus, a slower decay rate of the electric field is required in higher dimensions in order to obtain the dispersive decay rate stated in the conclusion of Theorem \ref{T0}.

\begin{remark}
As an alternative to \eqref{Assumption}, one may impose a growth condition on \alt{translated} moments, namely
$$M_n(t) \lesssim (t+\alpha)^{(2 - a)(n-2)}$$
for some $n > d(d-1)$ and $a > \tilde{a}(d)$,
in order to arrive at the same result.
\alt{For $d$ sufficiently large this yields 
$$M_n(t) \lesssim (t+\alpha)^{n-2 +\epsilon}$$
for $n > d(d-1)$ and some small $\epsilon > 0$.
The growth of such quantities has been investigated recently in \cite{Chen} for $d=3$. Though moments satisfying $n >d(d-1)=6$ were not included therein, the authors do obtain the estimate
$$ M_k(t) \lesssim (t+\alpha)^{\frac{9k-11}{7}}$$
for $k \to \frac{11}{2}^-$, which may be close to the above estimate, depending upon the value of $d$.}
In addition, instead of estimating moments, one can take $f_0 \in C^1_c(\bfR^{2d})$ and estimate the growth of the maximal translated spatial characteristics, namely
$$\mcR(t) = \sup \left  \{\left |\mcX(t,0,x,v) - (t + \alpha) \mcV(t,0,x,v) \right | : (x,v) \in \mathrm{supp}(f_0) \right \}.$$
Then, the field decay assumption \eqref{Assumption} can be replaced by a rate of growth on these characteristics, namely
$$\mcR(t) \lesssim (t+\alpha)^{1 - \frac{a}{d-1}}$$
for some $a  > \tilde{a}(d)$.
As $\tilde{a}(d) \to 1$ as $d \to \infty$, we have $1 - \frac{\tilde{a}(d)}{d-1} \to 1$ as $d \to \infty$, and using the best available estimate of the velocity support for $d = 3$ \cite{Yang}, one can currently derive the growth estimate
$$\mcR(t) \lesssim (t+\alpha)^\frac{9}{8}.$$ 
While this power would be insufficient to obtain \eqref{Assumption} even for $d \geq 4$, we note that it is not significantly distant from the required growth rate when $d$ is sufficiently large, and an advance in this direction may ultimately show that \alt{\emph{all}} smooth solutions of \eqref{VP} \alt{scatter and} satisfy the asymptotic behavior stated in Theorem \ref{T1}, especially in higher phase space dimensions.
\end{remark}

Next, we show that \eqref{Assumption} provides suitable information to obtain the precise asymptotic behavior of many quantities in the system, including the macroscopic densities and the distribution function along the flow generated by the linear transport operator. 
\begin{theorem}[Asymptotic Behavior]
\label{T1}
Let $d \geq 4$. Assume the conditions of Theorem \ref{T0} hold and $\langle x - \alpha v \rangle^p f_0 \in W^{1,\infty}(\mathbb{R}^{2d})$ for some $p > d+1$ and $\alpha \geq 0$.
Then, we have the following: 
\begin{enumerate}[(a)]
\item 
There exist a continuous $F_\infty \in L^1(\bfR^d) \cap L^\infty(\bfR^d)$ such that the spatial average
$$F(t,v) = \int f(t,x, v) \ dx$$
satisfies $F(t,v) \to F_\infty(v)$ uniformly as $t \to \infty$ with
$$\| F(t) - F_\infty \|_\infty \lesssim (t+\alpha)^{2-d}.$$

\item Define $E_\infty(v) = \nabla_v(\Delta_v)^{-1} F_\infty(v)$. Then, we have the self-similar asymptotic profiles
\begin{align*}
\sup_{x\in \bfR^d} \left | (t+\alpha)^{d-1} E(t,x) - E_\infty \left (\frac{x}{t+\alpha} \right ) \right | & \lesssim (t+\alpha)^{\frac{1-d}{d}},\\
\sup_{x \in \bfR^d}   \left | (t+\alpha)^d \rho(t,x) - F_\infty \left (\frac{x}{t+\alpha} \right) \right | & \lesssim (t+\alpha)^{-1},\\
\sup_{|x| \lesssim t + \alpha}  \left | (t+\alpha)^d j(t,x) - \frac{x}{t+\alpha}F_\infty\left (\frac{x}{t+\alpha} \right) \right | & \lesssim (t+\alpha)^{-1}.
\end{align*}

\item
There is a continuous $f_\infty \in L^1(\bfR^{2d}) \cap L^\infty(\bfR^{2d})$ such that
$$f(t,x +v(t+\alpha),v) \to f_\infty(x,v)$$
uniformly
as $t \to \infty$, namely we have the convergence estimate
$$\sup_{(x,v) \in \bfR^{2d}} \left | f(t,x +v(t+\alpha), v) - f_\infty(x,v) \right |  \lesssim (t+\alpha)^{2-d}.$$
\end{enumerate}
\end{theorem}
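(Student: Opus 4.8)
The plan is to reduce all three parts to a single scattering statement for the distribution adjusted by the free flow, $g(t,x,v) := f(t, x + v(t+\alpha), v)$, and then to recover (a) and (b) from (c) by velocity averaging and a self-similar change of variables. The starting point is Theorem \ref{T0}, which supplies the sharp field decay $\|E(t)\|_\infty \lesssim (t+\alpha)^{1-d}$ together with $\M(t) \lesssim 1$. Differentiating $g$ along the free flow and using \eqref{VP} yields the transport identity
$$\partial_t g(t,x,v) = -E(t, x+v(t+\alpha)) \cdot \big( \nabla_v g(t,x,v) - (t+\alpha)\nabla_x g(t,x,v) \big),$$
which already exposes the central difficulty: the dangerous term carries a factor $(t+\alpha)$, so only the integrability of $(t+\alpha)\|E(t)\|_\infty \lesssim (t+\alpha)^{2-d}$ — valid precisely because $d \geq 4$ — can control it. At the level of characteristics the same decay gives $\int_0^\infty \|E(s)\|_\infty \, ds < \infty$ and $\int_0^\infty (s+\alpha)\|E(s)\|_\infty \, ds < \infty$, so that along each trajectory $\mcV(t,0,x,v) \to \mcV_\infty(x,v)$ and $\mcX(t,0,x,v) - (t+\alpha)\mcV(t,0,x,v) \to Y_\infty(x,v)$, the velocity rate $\lesssim (t+\alpha)^{2-d}$ being the slower of the two. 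I would then define the scattering map $\Psi_\infty(x,v) = (Y_\infty(x,v), \mcV_\infty(x,v))$ and set $f_\infty := f_0 \circ \Psi_\infty^{-1}$.

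For part (c), Liouville's theorem makes the flow measure preserving, so $f(t) = f_0 \circ \Phi_t^{-1}$ for the phase flow $\Phi_t$; writing $\Psi_t$ for the free-flow-adjusted flow whose limit is $\Psi_\infty$, one has $g(t,\cdot) = f_0 \circ \Psi_t^{-1}$. Reparametrizing the supremum through $\Psi_t$ and using that $f_0$ is globally Lipschitz (a consequence of $\langle x - \alpha v\rangle^p f_0 \in W^{1,\infty}$) gives
$$\| g(t) - f_\infty \|_\infty \leq \mathrm{Lip}(f_0)\,\mathrm{Lip}(\Psi_\infty^{-1})\, \| \Psi_t - \Psi_\infty \|_\infty \lesssim (t+\alpha)^{2-d},$$
which is exactly the claimed rate. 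Continuity of $f_\infty$ and the bounds $f_\infty \in L^1(\bfR^{2d}) \cap L^\infty(\bfR^{2d})$ follow from continuity of $f_0$ and $\Psi_\infty^{-1}$ together with the weight, which transfers to $f_\infty$ and controls its tails. Parts (a) and (b) then follow by integration: since $x \mapsto x - v(t+\alpha)$ preserves $dx$, one has $F(t,v) = \int g(t,y,v)\,dy$, and (c) with the weighted domination yields $F(t,v) \to F_\infty(v) := \int f_\infty(y,v)\,dy$ at rate $(t+\alpha)^{2-d}$. For the densities I would use the dispersive substitution $v = (x-u)/(t+\alpha)$ in $\rho(t,x) = \int g(t, x-v(t+\alpha), v)\,dv$ and in $j(t,x) = \int v\, g(t, x-v(t+\alpha),v)\,dv$; replacing $g$ by $f_\infty$ and writing $\xi = x/(t+\alpha)$ produces the profiles $F_\infty(\xi)$ and $\xi F_\infty(\xi)$, with the $(t+\alpha)^{-1}$ rate coming from the Lipschitz-in-$v$ error $f_\infty(u, \xi - u/(t+\alpha)) - f_\infty(u,\xi)$, whose $u$-integral converges precisely because $p > d+1$ makes $\int |u| \langle u\rangle^{-p}\,du$ finite. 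Inserting the $\rho$-profile into the Riesz representation of $E$ and rescaling gives $(t+\alpha)^{d-1}E(t,x) \to E_\infty(\xi)$, the weaker rate $(t+\alpha)^{(1-d)/d}$ arising from a near/far splitting of the singular kernel.

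The main obstacle is the step hidden in the Lipschitz estimate for (c): one must show $\Psi_t$ is uniformly (in $t$) bi-Lipschitz, equivalently that the Jacobian $D\Psi_t$ of the characteristic flow stays bounded above and below. Differentiating \eqref{char} produces a variational equation whose coefficient is $\nabla_x E(t, \mcX)$, so closing a Grönwall bound on $D\Psi_t$ requires $\int_0^\infty (s+\alpha)\|\nabla_x E(s)\|_\infty\,ds < \infty$ — decay for a \emph{derivative} of the field that is not among the hypotheses and must be extracted. I expect this to proceed by a bootstrap: propagating weighted first derivatives $\langle x - \alpha v\rangle^p \nabla_{x,v} f$ in $L^\infty$ (for which the data assumption is tailored) to control $\nabla_x \rho$, then bounding $\nabla_x E$ by a Calderón--Zygmund estimate on $\rho$ using the moment bounds $\M(t) \lesssim 1$ to handle the spatial tails. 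Verifying that this loop closes, so that $\nabla_x E$ decays fast enough for the flow to remain bi-Lipschitz, is the technical heart of the argument.
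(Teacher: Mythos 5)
Your Lagrangian route --- building a scattering map $\Psi_\infty$ from the convergence of the characteristics $(\mcY,\mcV)$ and setting $f_\infty = f_0\circ\Psi_\infty^{-1}$ --- is genuinely different from the paper's Eulerian argument, and parts of it can be made to work, but two of your rate claims do not hold as stated. First, your bookkeeping in part (c) is backwards: the velocity component converges at rate $|\mcV(t)-\mcV_\infty|\lesssim \int_t^\infty \|E(s)\|_\infty\,ds \lesssim (t+\alpha)^{2-d}$, while the adjusted spatial component only satisfies $|\mcY(t)-Y_\infty|\lesssim \int_t^\infty (s+\alpha)\|E(s)\|_\infty\,ds \lesssim (t+\alpha)^{3-d}$, which is the \emph{slower} of the two; so your construction yields $\|g(t)-f_\infty\|_\infty \lesssim (t+\alpha)^{3-d}$, not $(t+\alpha)^{2-d}$. (In fairness, the paper's own Lemma \ref{LScattering} also obtains only $(t+\alpha)^{3-d}$, so the rate displayed in Theorem \ref{T1}(c) appears inconsistent with its proof; but your justification for the stronger rate is an error either way.) Second, and more seriously, deducing (a) from (c) is lossy: even granting a weighted version of (c), integrating the difference in $x$ cannot give $\|F(t)-F_\infty\|_\infty$ a better rate than (c) itself, so you end up short of the claimed $(t+\alpha)^{2-d}$. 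The paper instead proves (a) \emph{directly} (Lemma \ref{Funif}): averaging the equation for $g$ in $x$ and integrating by parts converts the dangerous term $(t+\alpha)E\cdot\nabla_x g$ into $(t+\alpha)\rho(t,x+v(t+\alpha))\,g$, and since $\nabla_x\cdot E=\rho$ decays one full power faster than $E$ (Lemma \ref{momrho}), this gives $|\partial_t F(t,v)|\lesssim (t+\alpha)^{1-d}$, a gain that is invisible to any pointwise estimate along characteristics. This divergence-structure trick is precisely why the paper does not route (a) through (c).

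On the step you call the technical heart: your bootstrap sketch (propagate weighted first derivatives, then a Calder\'on--Zygmund bound on $\nabla_x E$) is indeed the paper's Lemma \ref{DEf}, but two repairs are needed before your Gr\"onwall closes. The variational equation actually requires $\int_0^\infty (s+\alpha)^2\|\nabla_x E(s)\|_\infty\,ds<\infty$, not the weight $(s+\alpha)$ you state: writing $D\mcX = D\mcY+(t+\alpha)D\mcV$, the equation for $D\mcY$ contains the term $(t+\alpha)^2\,\nabla_xE\,D\mcV$. Moreover the singular-integral estimate is not log-free; the paper's version is $\|\nabla_xE(t)\|_\infty \lesssim \bigl(1+\ln^*\bigl(\|\nabla_x\rho(t)\|_\infty/\|\rho(t)\|_\infty^{(d+1)/d}\bigr)\bigr)\|\rho(t)\|_\infty$, so the resulting integral inequality has nonlinearity $u\ln u$ and closes only through an Osgood-type (double-exponential) Gr\"onwall argument. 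Both repairs do succeed --- the decay $\|\nabla_xE(t)\|_\infty\lesssim(t+\alpha)^{-d}\ln(1+t+\alpha)$ makes $(s+\alpha)^2\|\nabla_xE(s)\|_\infty$ integrable for $d\geq4$ --- but carrying them out is the substance of Lemma \ref{DEf}, which your proposal leaves unverified. Note finally that the paper never needs the flow to be bi-Lipschitz or invertible: it propagates the weighted Eulerian norms $\mcGx(t)$, $\mcGv(t)$ directly and gets (c) from time-integrability of $\|\partial_t g\|_\infty$, which sidesteps the inversion of $\Psi_\infty$ entirely.
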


We note that due to the faster dispersive decay rate of the electric field for $d \geq 4$ in comparison with $d=3$, modifications to the trajectories along which the distribution function scatters are not needed, which differs from the results of \cite{Pausader, Pankavich2021}.

\begin{remark}
The reader may notice that the optimal rate of $(t+\alpha)^{-1}$ is not quite achieved for the convergence of the field to its limiting function. This can be remedied by further assuming $\langle x -\alpha v \rangle^p \langle v \rangle^q f_0 \in W^{1,\infty}(\bfR^{2d})$ 
for some $p > d+1$, $q > d, \alpha \geq 0 $ and uniformly bounding these moments in time. The methods of Lemma \ref{Funif} then allow one to show $$\|F(t) - F_\infty\|_1 \lesssim (t+\alpha)^{2-d},$$
which can be used in the proof of Lemma \ref{LField} to obtain the $(t+\alpha)^{-1}$ convergence rate of the field.
\end{remark}

\begin{remark}
Theorems \ref{T0} and \ref{T1} can be extended to $d=3$, but require a stronger decay assumption than \eqref{Assumption} and a modification to the trajectories along which $f$ scatters (see \cite{Pausader, Pankavich2021}).
Furthermore, the results of \cite{Pankavich2021} can be extended to $d \geq 4$, but require $f_0 \in C^1_c(\bfR^{2d})$.
For such initial data, the tools of \cite{Pankavich2021} show that no mass, momentum, or energy are lost in the limit, and the decay rates of the field and densities in the case of a neutral (i.e., $\mcM = 0$), multispecies system 
are actually faster than stated above if the limiting charge density vanishes.
\end{remark}

As solutions in higher dimensions have not been widely studied, with the exception of \cite{Smulevici}, only the small-data solutions \cite{BD, Schaeffer21} established in three-dimensions can be readily extended to $d \geq 4$ (see \cite{Ha1}).
For this reason, our last result serves to establish global-in-time solutions launched by small initial moments in $L^1(\bfR^{2d})$ by taking advantage of the increased dispersive effects of the system posed in $d \geq 4$ and their influence on the electric field.
One particular novelty of these solutions is that, unlike previous small data solutions \cite{BD, Pausader, Schaeffer21} for $d =3$ and \cite{Smulevici} for $d \geq 4$, they allow $\|f_0\|_\infty$ to be arbitrarily large and further do not require a smallness assumption on derivatives of initial data.

\begin{theorem}[Small Moment Solutions]
\label{T3}
Let $d \geq 4$. Assume $\langle x - \alpha v \rangle^p f_0 \in W^{1,\infty}(\mathbb{R}^{2d})$ for some $p > d+1$ and $\alpha > 0$. Then, there exists $\epsilon_0 > 0$, depending only upon $d$, $\alpha$, and $n$,  such that for all $0 < \epsilon \leq \epsilon_0$, if $M_n(0) \leq \epsilon$ for some $n > d(d-1)$ then the classical solution of \eqref{VP} launched by $f_0$ exists globally in time and satisfies
$$M_n(t) \leq 2\epsilon$$ 
and
$$\|E(t) \|_\infty \lesssim  \epsilon^\frac{(d-2)(d+1)}{d(n-2)} (t+\alpha)^{1-d}.$$
Furthermore, the conclusions of Theorem \ref{T1} immediately apply.
\end{theorem}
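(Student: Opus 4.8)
The plan is to establish global solvability of \eqref{VP} and the stated bounds through a single continuity (bootstrap) argument on the transported moment $M_n(t)$, using a quantitative refinement of the field estimate behind Theorem \ref{T0} to trade smallness of $M_n$ for smallness and sharp decay of $E$, and then feeding this back into the evolution of $M_n$. Because $d \geq 4$ renders $(s+\alpha)^{2-d}$ integrable on $[0,\infty)$, no preliminary decay hypothesis is required; instead I will verify \eqref{Assumption} a posteriori so as to invoke Theorems \ref{T0} and \ref{T1}. First I would record the local existence of the $C_0^1$ solution on a maximal interval $[0,T^\ast)$ together with a continuation criterion (e.g. that the solution extends whenever $\|E(t)\|_\infty$ stays bounded on bounded time intervals), and set
\[
T=\sup\bigl\{t<T^\ast:\ M_n(s)\le 2\epsilon\ \text{for all } s\in[0,t]\bigr\},
\]
which is positive since $M_n(0)\le\epsilon$ and $M_n$ is continuous.

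The heart of the proof is the field bound under the bootstrap hypothesis $M_n(t)\le 2\epsilon$. Writing $\rho(t,x)=\int f\,dv$ and splitting the velocity integral at a radius in the transported variable $w=x-v(t+\alpha)$, the preserved bound $\|f(t)\|_\infty=\|f_0\|_\infty$ together with the moment $M_n$ yields, after optimizing the split, a time-decaying estimate for $\|\rho(t)\|_{L^{(n+d)/d}}$. The hypothesis $n>d(d-1)$ enters precisely here: it forces the exponent $(n+d)/d$ to exceed $d$, so that in the representation $E(t,x)=\tfrac1{d\omega_d}\int\tfrac{x-y}{|x-y|^d}\rho(t,y)\,dy$, whose kernel has size $|x-y|^{1-d}$, the singular part near the origin is controlled by $\|\rho(t)\|_{L^{(n+d)/d}}$ while the conserved mass $\mcM$ controls the tail. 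Interpolating the two contributions and tracking the exponents produces the sharp dispersive rate together with the advertised gain,
\[
\|E(t)\|_\infty\ \lesssim\ \epsilon^{\frac{(d-2)(d+1)}{d(n-2)}}\,(t+\alpha)^{1-d},
\]
with implied constant depending only on $d$, $\alpha$, $n$ and fixed norms of the data.

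With the field controlled, I would differentiate the moment along the flow \eqref{char}. Since $\tfrac{d}{dt}\bigl(\mcX-(t+\alpha)\mcV\bigr)=-(t+\alpha)E(t,\mcX)$, Liouville's theorem gives
\[
\Bigl|\tfrac{d}{dt}M_n(t)\Bigr|\le n(t+\alpha)\,\|E(t)\|_\infty\,M_{n-1}(t)\ \lesssim\ (t+\alpha)^{2-d}\,\epsilon^{\frac{(d-2)(d+1)}{d(n-2)}}\,M_n(t)^{\frac{n-1}{n}},
\]
where $M_{n-1}\le M_n^{(n-1)/n}\mcM^{1/n}$ by H\"older. Integrating over $[0,t]\subset[0,T)$ and using both the finiteness of $\int_0^\infty(s+\alpha)^{2-d}\,ds$ for $d\ge 4$ and the bootstrap control of $M_n^{(n-1)/n}$, the total increment is $O\!\bigl(\epsilon^{\,\frac{(d-2)(d+1)}{d(n-2)}+\frac{n-1}{n}}\bigr)$; a direct computation shows $\frac{(d-2)(d+1)}{d(n-2)}>\frac1n$ whenever $n>d(d-1)$, so this increment is $o(\epsilon)$, and choosing $\epsilon_0$ sufficiently small improves the bound to $M_n(t)\le\tfrac32\epsilon$ on $[0,T)$. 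By continuity this is incompatible with $T<T^\ast$, whence $T=T^\ast$; the field then remains bounded up to $T^\ast$, so the continuation criterion forces $T^\ast=\infty$. This yields global existence with $M_n(t)\le 2\epsilon$ and the stated decay, and since $\|E(t)\|_\infty\lesssim(t+\alpha)^{1-d}$ verifies \eqref{Assumption} with $a=d-1>\tilde a(d)$, Theorems \ref{T0} and \ref{T1} apply and deliver the remaining conclusions.

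The main obstacle is the field estimate: obtaining the interpolation that simultaneously produces the sharp $(t+\alpha)^{1-d}$ rate and the gain $\epsilon^{(d-2)(d+1)/(d(n-2))}$, which rests entirely on placing the Lebesgue index $(n+d)/d$ strictly above $d$ via $n>d(d-1)$, and on keeping all constants uniform so the bootstrap genuinely closes for small $\epsilon$. A secondary point is that controlling the far field of the kernel (and the lower moment $M_{n-1}$) by the transported moment rather than by the total mass $\mcM$ is what allows the threshold $\epsilon_0$ to be taken independent of $\mcM$, i.e. depending only on $d$, $\alpha$, and $n$. Everything else—the local theory, Liouville's identity, and the bookkeeping of exponents—is routine, if careful.
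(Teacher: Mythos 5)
Your proposal is sound and closes the bootstrap, and its skeleton is the same as the paper's: define $T_\infty=\sup\{t: M_n(t)\le 2\epsilon\}$, convert smallness of $M_n$ into a field bound $\|E(t)\|_\infty\lesssim \epsilon^{\gamma}(t+\alpha)^{1-d}$, feed that into the moment derivative $|M_n'|\lesssim (t+\alpha)\|E\|_\infty M_{n-1}$, use integrability of $(s+\alpha)^{2-d}$ for $d\ge 4$ to improve $2\epsilon$ to $\tfrac32\epsilon$, conclude $T_\infty=T_{\max}=\infty$, and then invoke Theorems \ref{T0} and \ref{T1} since $a=d-1>\tilde a(d)$. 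The genuine difference is in the key interpolation. The paper (Corollary \ref{EM}, built from Lemmas \ref{LE} and \ref{Lrho}) interpolates $\rho$ between $L^{\frac{d+2}{d}}$ and $L^{\frac{d+n}{d}}$, where the $L^{\frac{d+2}{d}}$ norm is controlled via $M_2(t)\lesssim 1$ from the Illner--Rein identity (Lemmas \ref{IR}, \ref{mompot}); the same $M_2$ bound underlies its moment interpolation $M_{n-1}\lesssim M_n^{\frac{n-3}{n-2}}$ in Lemma \ref{ME}. You instead interpolate between $L^1$ (conserved mass) and $L^{\frac{d+n}{d}}$, and bound $M_{n-1}\le M_n^{\frac{n-1}{n}}\mcM^{\frac1n}$ by H\"older against the mass. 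Your route buys something real: it never uses Lemma \ref{mompot}, hence does not lean on the repulsive structure or the decay of potential energy, and is in that sense more elementary and more portable. The cost is that your constants (hence your threshold $\epsilon_0$) carry powers of $\mcM$ in addition to $\|f_0\|_\infty$, whereas the paper's carry $\|f_0\|_\infty$ and the energy-type quantities entering Lemma \ref{mompot}.

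Two bookkeeping points. First, your interpolation does \emph{not} produce the advertised exponent: with $p=1$, $q=\frac{d+n}{d}$ in Lemma \ref{LE} and Lemma \ref{Lrho} with $k=n$, one gets
$$\|E(t)\|_\infty\lesssim \mcM^{\frac{n-d(d-1)}{dn}}\,(t+\alpha)^{1-d}\,M_n(t)^{\frac{d-1}{n}},$$
i.e.\ the gain is $\epsilon^{\frac{d-1}{n}}$, not $\epsilon^{\frac{(d-2)(d+1)}{d(n-2)}}$; the latter exponent is specific to the paper's $M_2$-based interpolation. This is harmless for the theorem because $\frac{d-1}{n}>\frac{(d-2)(d+1)}{d(n-2)}$ exactly when $n>d(d-1)$, so for $\epsilon\le 1$ your bound implies the stated one, and the bootstrap still closes since $\frac{d-1}{n}+\frac{n-1}{n}>1$ for $d>2$; but as written the claim that "tracking the exponents produces the advertised gain" is false for your decomposition. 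Second, your closing remark that replacing $\mcM$ by transported moments would make $\epsilon_0$ independent of $\mcM$ does not work as stated: $M_0=\mcM$ is the lowest moment and cannot be dominated by $M_n$ (interpolation only controls intermediate moments), so any tail estimate ultimately reintroduces either $\mcM$ or the data-dependent constants of Lemma \ref{mompot}.
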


\begin{remark}
The introduction of the parameter $\alpha > 0$ is only used to unify the three results and eliminate the singularity within the estimates that occurs as $t \to 0^+$. One may take $\alpha = 0$ in Theorems \ref{T0} and \ref{T1} and use known estimates (c.f. \cite{Castella}) to control all quantities for sufficiently small time. For this reason, we will inherently assume $\alpha > 0$ within the proofs of these two theorems. However, as Theorem \ref{T3} focuses on small data solutions, $\alpha > 0$ is needed to obtain control of the field near $t = 0$.
Furthermore, note that taking $\alpha = 0$ in Theorem \ref{T0} requires only $|x|^n f_0 \in L^1(\bfR^{2d})$, while the additional condition in Theorem \ref{T1} is merely  $ \langle x \rangle^p f_0 \in W^{1,\infty}(\bfR^{2d})$.
Hence, our results allow for classical solutions with heavy tails in the velocity variable, as well as infinite kinetic energy.
\end{remark}

\begin{remark}
The reader will note that we only study the \emph{repulsive} Vlasov-Poisson system, rather than also considering the possibility of an \emph{attractive} force field arising, for instance, within the analogous gravitational model. Indeed, while our methods do depend upon the repulsive nature of the force field (see the comment preceding Lemma \ref{mompot}), this dependence may not be crucial to our main arguments. Hence, it is likely that similar methods can be used in the attractive case to study the asymptotic behavior of small data solutions (cf., \cite{Pausader}). However, it is known \cite{HorstBlowup} that any smooth solution of the attractive Vlasov-Poisson system for $d \geq 4$ that possesses negative energy can only exist on a finite time interval. Thus, the dynamical behavior of solutions in the attractive case may be quite different.
\end{remark}

\subsection{Strategy of the Proofs}
To establish the theorems we will reformulate the original problem within a dispersive reference frame that is co-moving with the particles.
More specifically, let 
$$g(t,x,v) = f(t, x+v(t+\alpha), v)$$
and apply a change of variables inspired by \cite{Pausader} (see the proofs of Lemmas \ref{LField} and \ref{LDensity}) to the field and charge density so that \eqref{VP} becomes
\begin{equation}
\tag{VP$_g$}
\label{VPg}
\left \{ \begin{aligned}
& \partial_{t}g -  (t+\alpha) E(t,x+v(t+\alpha))\cdot \nabla_{x}g+  E(t,x+v(t+\alpha)) \cdot\nabla_{v}g=0\\
& E(t,x+v(t+\alpha)) = (t+\alpha)^{1-d}   \frac{1}{d\omega_d} \iint \frac{\xi}{|\xi|^d} \ g \left (t, w, v - \xi + \frac{x-w}{t+\alpha} \right ) \ dwd\xi
\end{aligned} \right .
\end{equation}
with
$$\rho(t,x)= (t+\alpha)^{-d}   \int_{\mathbb{R}^d} g \left (t,w,\frac{x-w}{t+\alpha} \right)\,dw$$
and the initial conditions $g(0,x,v) = f_0(x+\alpha v,v)$.
Additionally, the translated moments merely become
$$M_k(t) = \iint |x|^k g(t,x,v) \ dv dx $$
for every $k \geq 0$.
This reformulation is performed because $g$ possesses nicer properties than the original distribution function $f$. Indeed, both spatial moments and derivatives of $g$ can be uniformly bounded in time (Lemmas \ref{momrho} and \ref{DEf}), while the corresponding quantities for $f$ must grow in time.
Additionally, we note that the convolution in the electric field is now in the velocity variable rather than the spatial variable. 
Hence, as $t \to \infty$ one expects 
$$E(t,x+v(t+\alpha)) \sim (t+\alpha)^{1-d}  \frac{1}{d\omega_d} \iint \frac{\xi}{|\xi|^d} g(t, w, v-\xi) \ dw d\xi = (t+\alpha)^{1-d}\nabla_v \left (\Delta_v \right )^{-1} F(t,v)$$
locally in $x$.
Because of this, estimates of the field require an understanding of the growth of spatial moments of $g$ to control $F(t,v)$, and velocity derivatives of $g$ will be instrumental to demonstrating the asymptotic limit of the field.
Thus, our results may also provide better tools to obtain \emph{a priori} estimates on the growth of moments of $g$ and velocity derivatives $\nabla_v g$, which are the two main ingredients in the theorems.

In the next section, we establish preliminary estimates on the electric field and integrated moments of the distribution function, then use them to prove Theorem \ref{T0}. 
Section \ref{sect:T2} assumes the decay rate of the field guaranteed by Theorem \ref{T0} and then establishes estimates on derivatives of the field and the convergence of the spatial average.
The precise asymptotic behavior of the electric field and the charge and current densities is also obtained, as is the scattering of the distribution function stated in Theorem \ref{T1}. Finally, the construction of global-in-time small data solutions via the proof of Theorem \ref{T3} is provided in Section \ref{sect:T3}.

\section{Preliminary Lemmas \& Proof of Theorem \ref{T0}}
\label{sect:T1}
\label{Lemmas}

We first generalize an identity described within \cite{IR, Perthame} for the three-dimensional Vlasov-Poisson system and use it to obtain \emph{a priori} bounds on the second moment of the translated distribution function, as well as a decay estimate for the potential energy.

\begin{lemma}
\label{IR}
Let $f (t) \in C^1_0(\bfR^{2d}) $ be a classical solution of \eqref{VP} and $M_2(0) < \infty$ for some $\alpha \geq 0$. Then, the following identity holds
$$\frac{d}{dt} \left ( M_2(t) + (t+\alpha)^2\int |E(t,x)|^2 \ dx \right ) = (4-d) (t+\alpha)\int |E(t,x)|^2 \ dx.$$
\end{lemma}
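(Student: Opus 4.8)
The plan is to expand the transported second moment into standard monomial moments, differentiate each using the Vlasov equation, and recombine. Writing $s = t+\alpha$ and expanding the square,
$$M_2(t) = \iint |x|^2 f \,dv\,dx - 2s \iint (x\cdot v) f\,dv\,dx + s^2 \iint |v|^2 f\,dv\,dx =: A(t) - 2s B(t) + s^2 C(t).$$
For any smooth weight $\phi(x,v)$, multiplying the Vlasov equation by $\phi$ and integrating by parts (the boundary terms vanish because $f \in C^1_0$ and, for the field terms, because $E$ and $|E|^2$ decay at spatial infinity fast enough to keep the relevant integrals convergent once $M_2(0)<\infty$) gives $\frac{d}{dt}\iint \phi f = \iint (v\cdot\nabla_x\phi + E\cdot\nabla_v\phi) f$. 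Applying this with $\phi = |x|^2$, $x\cdot v$, and $|v|^2$ yields $\dot A = 2B$, $\dot B = C + \iint (x\cdot E) f\,dv\,dx$, and $\dot C = 2\iint (v\cdot E) f\,dv\,dx = 2\int E\cdot j\,dx$.

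The heart of the matter is the two field integrals. For $\dot B$, since $\int f\,dv = \rho$ and $\rho = \nabla\cdot E$ with $E$ curl-free (so $\partial_j E_i = \partial_i E_j$), a virial-type integration by parts gives
$$\iint (x\cdot E) f\,dv\,dx = \int (x\cdot E)(\nabla\cdot E)\,dx = -\int|E|^2\,dx - \tfrac12\int x\cdot\nabla|E|^2\,dx = \left(\tfrac{d}{2}-1\right)\int|E|^2\,dx,$$
the dimension-dependent factor arising from $\nabla\cdot x = d$ in the last integration by parts. For $\dot C$, the continuity equation $\partial_t\rho + \nabla\cdot j = 0$ together with $E = \nabla(\Delta)^{-1}\rho$ gives $\frac{d}{dt}\int|E|^2 = -2\int E\cdot j$, hence $\dot C = -\frac{d}{dt}\int|E|^2$ (this is simply conservation of the total energy).

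Writing $P(t) = \int|E(t,x)|^2\,dx$ and substituting the three derivatives into
$$\dot M_2 = \dot A - 2B - 2s\dot B + 2sC + s^2\dot C,$$
the $\pm 2B$ and $\pm 2sC$ contributions cancel, leaving $\dot M_2 = -(d-2)sP - s^2\dot P$. Since $\frac{d}{dt}(s^2 P) = 2sP + s^2\dot P$, adding yields $\frac{d}{dt}(M_2 + s^2 P) = (4-d)sP$, which is the claimed identity. I expect the only genuine obstacles to be the virial computation of $\int(x\cdot E)(\nabla\cdot E)\,dx$ and the rigorous justification that every boundary term vanishes: the former is where the dimension $d$ (and the sign convention in $E = \nabla(\Delta)^{-1}\rho$) enters the final coefficient, while the latter requires recording the spatial decay of $E$ and that $M_2(0)<\infty$ propagates integrability of the weighted densities.
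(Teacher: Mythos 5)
Your two field computations are correct, and the virial one takes a genuinely different route from the paper. The paper evaluates $\mcA(t)=\int x\cdot E\,\rho\,dx$ by symmetrizing the Riesz kernel (swapping $x\leftrightarrow y$ in the double integral) to get $\mcA=\tfrac{2-d}{2}\int U\rho\,dx=\tfrac{d-2}{2}\int|E|^2dx$, passing through the potential $U$. You instead integrate by parts locally, using only $\rho=\nabla\cdot E$ and the curl-free structure $\partial_jE_i=\partial_iE_j$, which yields the same factor $\tfrac{d}{2}-1$ without ever touching the fundamental solution; that is a cleaner way to see where the dimension enters. The treatment of $\int E\cdot j\,dx$ (continuity equation, i.e.\ conservation of energy) is identical in substance to the paper's $\mcB$ computation, and your final assembly is algebraically equivalent to the paper's identity $M_2'=-2(t+\alpha)\mcA+2(t+\alpha)^2\mcB$.

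There is, however, a genuine gap at the very first step. Expanding $M_2(t)=A(t)-2sB(t)+s^2C(t)$ and differentiating $A$, $B$, $C$ separately requires each of these integrals to be finite, and that does \emph{not} follow from $M_2(0)<\infty$: the hypothesis controls only the transported combination $|x-v(t+\alpha)|^2$. Concretely, take $\alpha=0$ and $f_0(x,v)=h(x)k(v)$ with $h\in C_c^1$ and $k\sim\langle v\rangle^{-d-1}$; then $M_2(0)=\bigl(\int|x|^2h\,dx\bigr)\bigl(\int k\,dv\bigr)<\infty$, yet $C(0)=\iint|v|^2f_0\,dv\,dx=\infty$ and even $B(0)=\iint (x \cdot v) f_0\,dv\,dx$ fails to converge absolutely, so your decomposition reads $\infty-\infty$ for $t>0$ (and $A(t)=\infty$ as well, since $|x|^2\lesssim|x-vt|^2+t^2|v|^2$). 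This is not a pathological corner case for this paper: the remark following Theorem \ref{T3} emphasizes that the results cover solutions with heavy velocity tails and \emph{infinite kinetic energy}, and this is precisely why the paper's proof differentiates the transported moment directly, obtaining $M_2'(t)=-2(t+\alpha)\iint(x-v(t+\alpha))\cdot E\,f\,dv\,dx$ with the combination $x-v(t+\alpha)$ kept intact inside the quadratic weight. (The paper's own subsequent split into $\mcA$ and $\mcB$ also presumes some integrability, but only of first moments against the bounded field --- a strictly milder implicit requirement than your separate second moments.) To repair your argument, either compute $\frac{d}{dt}M_2$ along the characteristic flow without expanding the square --- your local virial identity for $\int(x\cdot E)(\nabla\cdot E)\,dx$ can then be reused verbatim --- or run your computation for truncated data and pass to the limit.
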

\begin{proof}
We first recall that the potential $U(t,x)$ satisfies $\Delta U = \rho$, and thus
$$U(t,x) = \frac{1}{d(2-d)\omega_d} |x|^{2-d} \star \rho(t,x).$$
With this, the field is given by
$$E(t,x) = \nabla_x U(t,x) = \frac{1}{d\omega_d} \frac{x}{|x|^d}\star \rho(t,x).$$

Then, computing the time derivative of the second transported moment gives
\begin{eqnarray*}
M_2'(t) & = & -2(t+\alpha)  \iint (x-v(t+\alpha)) \cdot  E(t,x) f(t,x, v) \ dv dx\\
& = & -2(t+\alpha) \int x \cdot E(t,x) \rho(t,x) \ dx + 2(t+\alpha)^2 \int E(t,x) \cdot j(t,x) \ dx\\
& =: & -2(t+\alpha) \mcA(t) + 2(t+\alpha)^2 \mcB(t).
\end{eqnarray*}
Further, we compute
\begin{eqnarray*}
\mcA(t) & = & \int x \cdot E(t,x) \rho(t,x) \ dx \\
& = &  \frac{1}{d\omega_d} \iint x \cdot \frac{x-y}{|x-y|^d} \rho(t,x) \rho(t,y) \ dy \ dx\\ 
& = & \frac{1}{d\omega_d} \iint |x-y|^{2-d} \rho(t,x) \rho(t,y) \ dy \ dx - \int y \cdot E(t,y) \rho(t,y) \ dy\\
& = & (2-d) \int U(t,x) \rho(t,x) \ dx -\mcA(t).
\end{eqnarray*}
Hence, upon using $\rho = \Delta U$ and integrating by parts we find
$$\mcA(t) = \frac{2-d}{2}\int U(t,x) \rho(t,x) \ dx = \frac{d-2}{2}\int |E(t,x)|^2 \ dx.$$

Next, we compute the $\mcB$ term using the continuity equation
$$\partial_t \rho + \nabla_x \cdot j = 0,$$ which is obtained by integrating the Vlasov equation in $v$. 
Then, integrating by parts we find
\begin{eqnarray*}
\mcB(t) & = & \int E(t,x) \cdot j(t,x) \ dx\\
& = &- \int U(t,x) \nabla_x \cdot j(t,x) \ dx\\
& = & \int U(t,x) \partial_t \rho(t,x) \ dx\\
& = & \frac{1}{2} \frac{d}{dt} \int U(t,x) \rho(t,x) \ dx\\
& = & - \frac{1}{2} \frac{d}{dt} \int |E(t,x)|^2 \ dx.
\end{eqnarray*}

With these expressions, the derivative of the transported second moment becomes
$$M_2'(t)  = (2-d)(t+\alpha)\int |E(t,x)|^2 \ dx - (t+\alpha)^2 \frac{d}{dt} \int |E(t,x)|^2 \ dx.$$
The right side can be rewritten as
$$- \frac{d}{dt} \left ( (t+\alpha)^2 \int |E(t,x)|^2 \ dx \right ) + (4-d)(t+\alpha)\int |E(t,x)|^2 \ dx.$$
and the identity follows.
\end{proof}

As mentioned in \cite{IR}, this identity holds only for the \emph{repulsive} Vlasov-Poisson system, as it implies decay of the potential energy, which we now demonstrate. 

\begin{lemma}
\label{mompot}
For $d \geq 4$ we have
$$\| E(t) \|_2 \lesssim (t+\alpha)^{-1}$$
and
$$ M_2(t) \lesssim 1.$$
\end{lemma}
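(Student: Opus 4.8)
The plan is to read the conclusion directly off the exact identity in Lemma \ref{IR}, exploiting the sign of the coefficient $(4-d)$ in dimension $d \geq 4$. I would define the nonnegative functional
$$G(t) = M_2(t) + (t+\alpha)^2 \int |E(t,x)|^2 \ dx,$$
so that Lemma \ref{IR} reads $G'(t) = (4-d)(t+\alpha)\int |E(t,x)|^2 \ dx$. Since $4 - d \leq 0$, $t+\alpha \geq 0$, and the field energy is nonnegative, this forces $G'(t) \leq 0$. Hence $G$ is non-increasing, and $G(t) \leq G(0)$ for all $t \geq 0$. This monotonicity is the entire engine of the proof, and it is precisely here that both $d \geq 4$ and the repulsive structure underlying Lemma \ref{IR} are used.

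From $G(t) \leq G(0)$ both estimates follow by isolating the two nonnegative summands. Because $M_2(t) \leq G(t) \leq G(0)$, I obtain $M_2(t) \lesssim 1$; and because $(t+\alpha)^2 \|E(t)\|_2^2 \leq G(t) \leq G(0)$, I obtain $\|E(t)\|_2 \lesssim (t+\alpha)^{-1}$. No further computation is needed once the sign is observed.

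The only point that requires genuine care — and the step I expect to be the main (and essentially sole) obstacle — is verifying that $G(0) = M_2(0) + \alpha^2 \|E(0)\|_2^2$ is finite, since the estimate is vacuous otherwise. Finiteness of $M_2(0)$ follows from the standing hypothesis $M_n(0) < \infty$ with $n > d(d-1) > 2$: applying H\"older's inequality to the measure $f_0 \, dv \, dx$ of total mass $\mcM$ gives $M_2(0) \leq \mcM^{1 - 2/n} M_n(0)^{2/n} < \infty$. For the initial field energy, the assumption $f_0 \in C^1_0(\bfR^{2d})$ with finite moments yields $\rho_0 \in L^1(\bfR^d) \cap L^\infty(\bfR^d)$, and since $E(0) = \nabla(\Delta)^{-1}\rho_0$ is a Riesz potential of order one applied to $\rho_0$, the Hardy--Littlewood--Sobolev inequality gives $\|E(0)\|_2 \lesssim \|\rho_0\|_{L^{2d/(d+2)}} < \infty$ for $d \geq 4$, noting that $1 < \tfrac{2d}{d+2} < 2$ so that the intermediate Lebesgue norm is controlled by the $L^1 \cap L^\infty$ bound on $\rho_0$. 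I do not anticipate any further difficulty; the substance of the lemma is the one-line monotonicity argument, and this finiteness check is the only technical verification.
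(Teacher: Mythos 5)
Your argument is exactly the paper's proof: the paper sets $\psi(t) = (t+\alpha)^2\|E(t)\|_2^2$, notes that Lemma \ref{IR} together with $d \geq 4$ and $\psi \geq 0$ makes $M_2(t) + \psi(t)$ non-increasing, and reads off both bounds from $M_2(t) + \psi(t) \leq M_2(0) + \psi(0)$, just as you do with your $G(t)$. Your closing verification that $G(0)$ is finite (interpolating $M_2(0)$ against $M_n(0)$ and bounding $\|E(0)\|_2$ via Hardy--Littlewood--Sobolev) is a detail the paper leaves implicit, and is a reasonable addition rather than a departure in method.
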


\begin{proof}
We let 
$$\psi(t) = (t+\alpha)^2 \|E(t) \|_2^2$$
so that the identity in Lemma \ref{IR} reads
$$\frac{d}{dt} \biggl ( M_2(t) + \psi(t) \biggr ) = (4-d)\frac{\psi(t)}{t+\alpha}$$
%
With this, we have
$$\frac{d}{dt} \left ( M_2(t) + \psi(t) \right ) \leq 0$$
as $d \geq 4$ and $\psi(t) \geq 0$.
Of course, this implies
$$\psi(t) \leq M_2(0) + \psi(0) - M_2(t) \lesssim 1$$
and
$$M_2(t) \leq M_2(0) + \psi(0) - \psi(t) \lesssim 1,$$ 
and the stated estimates follow.
\end{proof}

Next, we obtain improved field decay rates that follow from the main assumption \eqref{Assumption}.
First, we state a standard estimate on the gradient of the inverse Laplace operator, which will be used throughout.
%

\begin{lemma}[]
\label{LE}
For any $1 \leq p < d < q \leq \infty$ and $\phi \in L^p(\bfR^d) \cap L^q(\bfR^d)$, we have
$$\|\nabla (\Delta)^{-1} \phi\|_\infty \lesssim \| \phi \|_p^\frac{p(q-d)}{d(q-p)} \| \phi \|_q^\frac{q(d-p)}{d(q-p)}.$$
In particular, for $d \neq 1$, choosing $p=1$ and $q = \infty$ yields
$$\|\nabla (\Delta)^{-1} \phi\|_\infty \lesssim \| \phi \|_1^\frac{1}{d} \| \phi \|_\infty^\frac{d-1}{d}.$$
Similarly, for any $k > d(d-1)$ with $d \geq 4$ we may choose $p = \frac{d+2}{d}$ and $q = \frac{d+k}{d}$ to find
$$\| E(t) \|_\infty \lesssim \| \rho(t) \|_{\frac{d+2}{d}}^{\frac{(d+2)(k-d^2+d)}{d^2(k-2)}} \ \| \rho(t) \|_{\frac{d+k}{d}}^{\frac{(d+k)(d-2)(d+1)}{d^2(k-2)}}.$$
\end{lemma}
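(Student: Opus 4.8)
The plan is to prove the general Gagliardo--Nirenberg-type estimate first and then obtain the two stated specializations by inserting admissible exponent pairs. The general inequality $\|\nabla(\Delta)^{-1}\phi\|_\infty \lesssim \|\phi\|_p^\theta \|\phi\|_q^{1-\theta}$ rests on the representation $\nabla(\Delta)^{-1}\phi(x) = \frac{1}{d\omega_d}\int \frac{x-y}{|x-y|^d}\phi(y)\,dy$, so that the kernel $K(z) = \frac{z}{|z|^d}$ satisfies $|K(z)| \lesssim |z|^{1-d}$. First I would split the convolution integral over a ball $B_R = \{|x-y| \leq R\}$ and its complement, and estimate each piece by H\"older's inequality. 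On $B_R$ the kernel $|z|^{1-d}$ lies in $L^{p'}_{\mathrm{loc}}$ precisely when $p' (d-1) < d$, i.e.\ when $p < d$, contributing a factor $\lesssim \|\phi\|_p R^{d/p' - (d-1)} = \|\phi\|_p R^{1-d/p}$. On the complement the kernel lies in $L^{q'}$ away from the origin when $q'(d-1) > d$, i.e.\ when $q > d$, contributing $\lesssim \|\phi\|_q R^{1-d/q}$. Here the hypotheses $p < d < q$ are exactly what make both local and tail integrals of the kernel finite.

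Having obtained a bound of the form $\|\nabla(\Delta)^{-1}\phi\|_\infty \lesssim \|\phi\|_p R^{1-d/p} + \|\phi\|_q R^{1-d/q}$, the next step is to optimize over the free radius $R > 0$. Since $1 - d/p < 0 < 1 - d/q$, the first term is decreasing and the second increasing in $R$, so balancing the two (setting $R$ so that $\|\phi\|_p R^{1-d/p} \sim \|\phi\|_q R^{1-d/q}$) yields the optimal choice $R \sim (\|\phi\|_q / \|\phi\|_p)^{1/(d/p - d/q)}$. Substituting this back and simplifying the exponents gives the claimed powers $\frac{p(q-d)}{d(q-p)}$ on $\|\phi\|_p$ and $\frac{q(d-p)}{d(q-p)}$ on $\|\phi\|_q$; a sanity check confirms these exponents are nonnegative, sum to $1$, and reduce correctly in limiting cases.

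The two special cases then follow by direct substitution. For $p=1$, $q=\infty$ (admissible since $d \neq 1$), the exponents collapse to $\frac1d$ and $\frac{d-1}{d}$. For the field estimate I would apply the general bound to $\phi = \rho(t)$ with $p = \frac{d+2}{d}$ and $q = \frac{d+k}{d}$; one must verify admissibility, namely $p < d < q$, which for $d \geq 4$ amounts to $\frac{d+2}{d} < d$ (immediate) and $\frac{d+k}{d} > d$, i.e.\ $k > d^2 - d = d(d-1)$ (exactly the hypothesis). Then it is a matter of carefully computing $\theta$ and $1-\theta$ for these choices and matching them to the stated exponents $\frac{(d+2)(k-d^2+d)}{d^2(k-2)}$ and $\frac{(d+k)(d-2)(d+1)}{d^2(k-2)}$.

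I expect the main obstacle to be purely the bookkeeping of exponents rather than any conceptual difficulty: the split-and-optimize argument is standard, but carrying the general formula through the substitution $p=\frac{d+2}{d}$, $q=\frac{d+k}{d}$ and simplifying $q-p = \frac{k-2}{d}$, $q-d = \frac{k-d^2+d}{d}$, and $d-p = \frac{d^2-d-2}{d} = \frac{(d-2)(d+1)}{d}$ correctly to land exactly on the advertised powers requires some care. The factorization $d^2 - d - 2 = (d-2)(d+1)$ is the one identity that must be spotted to recognize the $(d-2)(d+1)$ appearing in the second exponent.
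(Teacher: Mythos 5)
Your overall strategy --- split the convolution at radius $R$, apply H\"older on each piece, optimize over $R$ --- is exactly the paper's, and your exponent bookkeeping for the two specializations (in particular spotting $d^2-d-2=(d-2)(d+1)$ and computing $q-p=\frac{k-2}{d}$, $q-d=\frac{k-d^2+d}{d}$) is correct. However, there is a genuine error in the core step: you have paired the norms with the wrong regions, and as written both of your H\"older estimates are false. Near the singularity you estimate with $\|\phi\|_p$, which requires the kernel $|z|^{1-d}$ to lie in $L^{p'}(B_R)$; this holds iff $(d-1)p' < d$, i.e. iff $p' < \frac{d}{d-1}$, i.e. iff $p > d$ (conjugation reverses inequalities), contradicting the hypothesis $p < d$. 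Indeed, for $p<d$ the integral $\int_0^R r^{(1-d)p'+d-1}\,dr$ diverges at the origin. Symmetrically, your tail estimate with $\|\phi\|_q$ requires $(d-1)q' > d$, i.e. $q < d$, contradicting $q > d$; for $q>d$ the integral $\int_R^\infty r^{(1-d)q'+d-1}\,dr$ diverges at infinity. So both of your displayed bounds $I \lesssim \|\phi\|_p R^{1-d/p}$ and $II \lesssim \|\phi\|_q R^{1-d/q}$ rest on divergent kernel integrals: your integrability conditions on $p'$ and $q'$ are right, but the translations ``i.e. $p<d$'' and ``i.e. $q>d$'' flip the inequalities.

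The fix is to swap the pairing, which is precisely what the paper does: use the high norm near the singularity, $\int_{|x-y|<R} \lesssim \|\phi\|_q R^{1-d/q}$ (valid since $q>d$), and the low norm in the tail, $\int_{|x-y|>R} \lesssim \|\phi\|_p R^{1-d/p}$ (valid since $p<d$). The resulting sum $\|\phi\|_p R^{1-d/p} + \|\phi\|_q R^{1-d/q}$ is the same expression you wrote down, so your optimization step and the two specializations survive verbatim, with one further correction: solving the balancing condition $\|\phi\|_p R^{1-d/p} = \|\phi\|_q R^{1-d/q}$ gives $R = \left(\|\phi\|_p/\|\phi\|_q\right)^{\frac{pq}{d(q-p)}}$, not its reciprocal as in your formula $R \sim \left(\|\phi\|_q/\|\phi\|_p\right)^{1/(d/p-d/q)}$; it is the former choice that produces the advertised exponents $\frac{p(q-d)}{d(q-p)}$ and $\frac{q(d-p)}{d(q-p)}$.
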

\begin{proof}
To establish the estimates, we decompose the spatial integral into contributions near and far from the singularity, so that
$$|\nabla (\Delta)^{-1} \phi(x)| \lesssim \int_{|x-y| < R} \frac{\phi(y)}{|x-y|^{d-1}} \ dy + \int_{|x-y| > R} \frac{\phi(y)}{|x-y|^{d-1}} \ dy := I + II.$$
As $q > d$ the first portion provides the estimate
$$I \lesssim \|\phi(t) \|_q \left ( \int_0^R r^{(d-1)\left ( 1 - \frac{q}{q-1} \right )} \ dr \right )^{\frac{q-1}{q}} \lesssim \|\phi(t) \|_q R^{\frac{q-d}{q}}, $$
while the second analogously yields
$$II \lesssim \| \phi(t) \|_p R^{\frac{p-d}{p}} $$
as $p< d$.
Combining these estimates and choosing
$$R = \left ( \frac{\|\phi(t) \|_p}{\|\phi(t) \|_q} \right )^\frac{pq}{d(q-p)}$$
gives 
$$|\nabla (\Delta)^{-1} \phi(x)| \lesssim \| \phi \|_p^\frac{p(q-d)}{d(q-p)} \| \phi \|_q^\frac{q(d-p)}{d(q-p)},$$
and the stated estimates follow.
\end{proof}

\begin{lemma}
\label{Lrho}
For any $k \geq 0$ we have
$$ \| \rho(t) \|_{\frac{d+k}{d}} \lesssim (t+\alpha)^\frac{-kd}{d+k} M_k(t)^\frac{d}{d+k}.$$
In particular, for $d \geq 4$ using Lemma \ref{mompot} with $k=2$ gives
$$ \| \rho(t) \|_{\frac{d+2}{d}} \lesssim (t+\alpha)^\frac{-2d}{d+2}.$$
\end{lemma}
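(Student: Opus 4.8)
The goal is to bound $\|\rho(t)\|_{\frac{d+k}{d}}$ in terms of the transported moment $M_k(t)$ and the time decay factor. The plan is to pass through the co-moving representation of the density given in the introduction, namely
$$\rho(t,x) = (t+\alpha)^{-d} \int_{\bfR^d} g\left(t, w, \frac{x-w}{t+\alpha}\right) \ dw,$$
which already exhibits the expected $(t+\alpha)^{-d}$ prefactor, and then interpolate a weighted $L^\infty$ bound against an $L^1$ bound on the velocity slice of $g$. The exponent $\frac{d+k}{d}$ is precisely the one for which this interpolation closes, so the choice of Lebesgue index is not free but dictated by the moment weight $|x|^k$.

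First I would recall that $\rho(t,\cdot)$ is a velocity average, so a clean way to proceed is to estimate $\rho$ pointwise. Writing $\rho(t,x) = \int f(t,x,v)\,dv$ directly, I would split the integral according to whether $|x - v(t+\alpha)|$ is small or large relative to a threshold $\lambda > 0$. On the region $|x - v(t+\alpha)| > \lambda$ one uses the moment weight: the integrand carries a factor $|x-v(t+\alpha)|^k$ that is at least $\lambda^k$, so this contribution is controlled by $\lambda^{-k}$ times a local piece of $M_k$. On the complementary region $|x - v(t+\alpha)| \le \lambda$ one uses the $L^\infty$ bound on $f$ together with the volume of the admissible set of $v$, which is a ball of radius $\lambda/(t+\alpha)$ in velocity and hence has measure $\sim (\lambda/(t+\alpha))^d$. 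Optimizing over $\lambda$ balances these two contributions and produces a pointwise decay estimate for $\rho$, after which raising to the power $\frac{d+k}{d}$ and integrating in $x$ recovers the stated $L^{\frac{d+k}{d}}$ bound; the powers $\frac{-kd}{d+k}$ on $(t+\alpha)$ and $\frac{d}{d+k}$ on $M_k$ fall out of this optimization.

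Equivalently, and perhaps more transparently, one can argue by genuine interpolation: the map $\rho \mapsto \|\rho\|_{\frac{d+k}{d}}^{\frac{d+k}{d}}$ can be written as $\int \rho \cdot \rho^{k/d}\,dx$, and one estimates $\rho$ in $L^1$ (total charge $\mcM$, conserved) against a weighted bound. The cleanest route uses the change of variables $v \mapsto \frac{x-w}{t+\alpha}$ in the $g$-representation: a fixed velocity slice $\int g(t,w,u)\,dw$ controls $\|g(t,\cdot,u)\|_{L^1_w}$, and the moment $M_k(t) = \iint |x|^k g \,dv\,dx$ provides the weighted control, so Hölder or a direct Chebyshev-type splitting in the $w$-integral yields the claim.

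The main obstacle, such as it is, lies in correctly tracking the Jacobian factors through the change of variables and verifying that the optimization exponent matches $\frac{d+k}{d}$ rather than a nearby index; this is bookkeeping rather than genuine difficulty. The second, ``in particular'' assertion is then immediate: set $k=2$, apply the already-established bound $M_2(t) \lesssim 1$ from Lemma \ref{mompot}, and simplify the exponent $\frac{-2d}{d+2}$, which requires only $d \ge 4$ so that Lemma \ref{mompot} is available.
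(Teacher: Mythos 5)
Your main argument (the pointwise splitting of $\int f\,dv$ into the region $|x-v(t+\alpha)|\le\lambda$, estimated by $\|f\|_\infty$ times the velocity-ball volume $(\lambda/(t+\alpha))^d$, versus the region $|x-v(t+\alpha)|>\lambda$, estimated by Chebyshev against the local moment, followed by optimization in $\lambda$, raising to the power $\tfrac{d+k}{d}$, and integrating in $x$) is exactly the paper's proof, with your $\lambda$ playing the role of the paper's $R$. The detour through the co-moving $g$-representation in your first and third paragraphs is unnecessary but harmless, and the ``in particular'' step via Lemma \ref{mompot} matches the paper as well.
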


\begin{proof}
For any $R > 0$, we decompose the integral into
\begin{eqnarray*}
|\rho(t,x)| & \lesssim &  \int_{\left | \frac{x}{t+\alpha} - v \right | < \frac{R}{t+\alpha}} f(t,x,v) \ dv +  \int_{\left | \frac{x}{t+\alpha} - v \right | > \frac{R}{t+\alpha}} f(t,x,v) \ dv\\
& \lesssim & \left ( \frac{R}{t+\alpha} \right )^d + R^{-k} \int |x - v(t+\alpha)|^k f(t,x,v) \ dv\\
& \lesssim & R^d (t+\alpha)^{-d} + R^{-k}m_k(t,x)
\end{eqnarray*}
where we have denoted $m_k(t,x) =  \int |x - v(t+\alpha)|^2 f(t,x,v) \ dv$. 
Choosing
$$R = (t+\alpha)^\frac{d}{d+k} m_k(t,x)^\frac{1}{d+k}$$
yields
$$\rho(t,x)  \lesssim (t+\alpha)^{-\frac{kd}{d+k}} m_k(t,x)^\frac{d}{d+k}$$
and thus
$$\int \rho(t,x)^\frac{d+k}{d} \ dx \lesssim (t+\alpha)^{-k} \int m_k(t,x) \ dx = (t+\alpha)^{-k}  M_k(t).$$
Raising this inequality to the $\frac{d}{d+k}$ power gives the first result, and invoking
Lemma \ref{mompot} for $k=2$ produces the latter estimate.
\end{proof}

\begin{corollary}
\label{EM}
Combining the final estimate of Lemma \ref{LE} and the results of Lemma \ref{Lrho} for any $k > d(d-1)$ provides the estimate
$$\| E(t) \|_\infty \lesssim (t+\alpha)^{1-d} \ M_k(t)^\frac{(d-2)(d+1)}{d(k-2)}.$$
\end{corollary}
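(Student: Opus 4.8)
The plan is to treat this corollary as a pure chaining of the two preceding lemmas, with no new analytic input: Lemma \ref{LE} writes $\|E(t)\|_\infty$ as a product of two $L^p$-norms of $\rho$ with explicit exponents, and Lemma \ref{Lrho} bounds each such norm by a power of $(t+\alpha)$ times a power of $M_k(t)$. So I would substitute the density estimates into the field estimate and verify that the resulting powers of $(t+\alpha)$ and of $M_k(t)$ match the claimed exponents. The entire content is exponent bookkeeping.

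First I would record the admissibility of the Hölder exponents. Taking $p = \frac{d+2}{d}$ and $q = \frac{d+k}{d}$ in Lemma \ref{LE}, the constraint $1 \leq p < d < q$ must hold. The left inequality $\frac{d+2}{d} < d$ is equivalent to $(d-2)(d+1) > 0$, which holds for $d \geq 4$, and the right inequality $d < \frac{d+k}{d}$ is equivalent to $k > d(d-1)$, which is exactly the hypothesis. Hence the final displayed bound of Lemma \ref{LE} applies and reads
$$\|E(t)\|_\infty \lesssim \|\rho(t)\|_{\frac{d+2}{d}}^{A} \ \|\rho(t)\|_{\frac{d+k}{d}}^{B}, \qquad A = \frac{(d+2)(k-d^2+d)}{d^2(k-2)}, \quad B = \frac{(d+k)(d-2)(d+1)}{d^2(k-2)}.$$

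Next I would insert the two bounds from Lemma \ref{Lrho}: the special case $k=2$, giving $\|\rho(t)\|_{(d+2)/d} \lesssim (t+\alpha)^{-2d/(d+2)}$, for the first factor, and the general version $\|\rho(t)\|_{(d+k)/d} \lesssim (t+\alpha)^{-kd/(d+k)} M_k(t)^{d/(d+k)}$ for the second. The power of $M_k(t)$ produced is $\frac{d}{d+k}B = \frac{(d-2)(d+1)}{d(k-2)}$, exactly the claimed exponent. For the time weight I would collect the exponent $-\frac{2d}{d+2}A - \frac{kd}{d+k}B$; after cancelling the factors $(d+2)$ and $(d+k)$ this becomes $-\frac{2(k-d^2+d)+k(d-2)(d+1)}{d(k-2)}$.

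The only step requiring any attention is the simplification of this numerator, which is the crux that makes the time exponent independent of $k$. Using $k-d^2+d = k - d(d-1)$ and $(d-2)(d+1) = d^2-d-2$, it factors as
$$2k - 2d(d-1) + k(d^2 - d - 2) = kd(d-1) - 2d(d-1) = d(d-1)(k-2),$$
so the time exponent collapses to $-(d-1) = 1-d$, yielding the stated bound. Thus the proof is essentially mechanical, and the \emph{hard part}, such as it is, is recognizing the factorization $2(k-d^2+d)+k(d-2)(d+1) = d(d-1)(k-2)$.
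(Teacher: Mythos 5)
Your proposal is correct and is precisely the argument the paper intends: the corollary is stated without a separate proof because it is exactly this substitution of Lemma \ref{Lrho} (with exponent $2$ for the first factor and $k$ for the second) into the final estimate of Lemma \ref{LE}, and your exponent bookkeeping, including the key factorization $2(k-d^2+d)+k(d-2)(d+1)=d(d-1)(k-2)$ that makes the time decay $(t+\alpha)^{1-d}$ independent of $k$, checks out.
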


Now that we have established control of the field in terms of moments, we will bound moments in terms of the supremum of the field.
This will be accomplished by propagating moments in time via the Vlasov equation, but first we need an interpolation estimate for $M_k(t)$.

\begin{lemma}
\label{LM}
For any $\ell \geq 0$, $p \in [0,\ell]$ and $q \in[0,\infty)$, we have
$$M_\ell(t) \leq M_{\ell-p}(t)^\frac{q}{p+q}M_{\ell+q}(t)^\frac{p}{p+q}.$$
\end{lemma}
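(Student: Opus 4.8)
The statement is an interpolation inequality for the moments
$$M_\ell(t) = \iint |x - v(t+\alpha)|^\ell f(t,x,v)\,dv\,dx,$$
asserting that $M_\ell \leq M_{\ell-p}^{q/(p+q)} M_{\ell+q}^{p/(p+q)}$ for $p \in [0,\ell]$ and $q \geq 0$. This is a pointwise (in the integrand) consequence of convexity of the map $s \mapsto |u|^s$ combined with H\"older's inequality applied to the measure $f(t,x,v)\,dv\,dx$.

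The plan is as follows. First I would write $u = |x - v(t+\alpha)|$ and observe that the three exponents satisfy the linear relation
$$\ell = \frac{q}{p+q}(\ell - p) + \frac{p}{p+q}(\ell + q),$$
which one checks immediately since $\tfrac{q}{p+q}(\ell-p) + \tfrac{p}{p+q}(\ell+q) = \ell + \tfrac{-pq + pq}{p+q} = \ell$. Thus, setting $\theta = \frac{q}{p+q}$ so that $1 - \theta = \frac{p}{p+q}$, we have the pointwise identity
$$u^\ell = u^{\theta(\ell-p)}\, u^{(1-\theta)(\ell+q)} = \left(u^{\ell-p}\right)^\theta \left(u^{\ell+q}\right)^{1-\theta}.$$
This decomposes the integrand of $M_\ell$ as a product of powers of the integrands of $M_{\ell-p}$ and $M_{\ell+q}$.

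Next I would apply H\"older's inequality with conjugate exponents $\frac{1}{\theta}$ and $\frac{1}{1-\theta}$ (which are valid since $\theta \in [0,1]$) against the nonnegative measure $d\mu = f(t,x,v)\,dv\,dx$, writing
$$M_\ell(t) = \iint \left(u^{\ell-p}\right)^\theta \left(u^{\ell+q}\right)^{1-\theta} \, d\mu \leq \left(\iint u^{\ell-p}\,d\mu\right)^\theta \left(\iint u^{\ell+q}\,d\mu\right)^{1-\theta}.$$
Here the first factor is exactly $M_{\ell-p}(t)^\theta$ and the second is $M_{\ell+q}(t)^{1-\theta}$, which is precisely the claimed bound after substituting $\theta = q/(p+q)$ and $1-\theta = p/(p+q)$. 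The hypothesis $f \geq 0$ guarantees $d\mu$ is a genuine (nonnegative) measure, so H\"older applies.

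I do not anticipate a serious obstacle here; the only points requiring minor care are the degenerate cases. When $p = 0$ one has $\theta = 1$ and the inequality reduces to the trivial identity $M_\ell \leq M_\ell$; when $q = 0$ one has $\theta = 0$ and it again reduces to an identity. For $p, q > 0$ both conjugate exponents are finite and the argument above goes through verbatim. If one wished to be fully rigorous about finiteness, note the inequality is vacuously true whenever either $M_{\ell-p}(t)$ or $M_{\ell+q}(t)$ is infinite; the content lies in the case where both are finite, where H\"older delivers the bound directly.
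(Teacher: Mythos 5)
Your proof is correct, but it takes a different route from the paper. The paper's proof uses the same splitting technique it employs elsewhere (e.g., in Lemmas \ref{LE} and \ref{Lrho}): it decomposes the integral into the regions $|x-v(t+\alpha)| < R$ and $|x-v(t+\alpha)| > R$, bounds the integrand by $R^p\,|x-v(t+\alpha)|^{\ell-p}$ and $R^{-q}\,|x-v(t+\alpha)|^{\ell+q}$ respectively, and then optimizes over $R$, choosing $R = \left( M_{\ell+q}(t)/M_{\ell-p}(t)\right)^{1/(p+q)}$. Your approach instead writes the integrand pointwise as $\left(u^{\ell-p}\right)^{\theta}\left(u^{\ell+q}\right)^{1-\theta}$ with $\theta = q/(p+q)$ and applies H\"older's inequality against the measure $f\,dv\,dx$. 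The two arguments are of comparable length, but yours is slightly sharper: the region-splitting plus optimization, carried out carefully, produces the bound $M_\ell(t) \leq 2\,M_{\ell-p}(t)^{q/(p+q)}M_{\ell+q}(t)^{p/(p+q)}$ (each of the two optimized terms equals the right-hand side), so strictly speaking the paper's argument proves the stated inequality only up to a factor of $2$, whereas H\"older delivers constant $1$ exactly as the lemma asserts. This discrepancy is immaterial to the paper, since the lemma is only ever invoked through $\lesssim$ bounds (as in Lemma \ref{ME}), but your version matches the literal statement. Your handling of the degenerate cases $p=0$ or $q=0$ and of possibly infinite moments is also fine.
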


\begin{proof}
The proof is straightforward, but we include it for completeness. Separating the estimates into regions within which the moments are small and large, respectively, we find
\begin{eqnarray*}
M_\ell(t) & = & \iint_{|x - v(t+\alpha)| < R} |x - v(t+\alpha)|^{\ell} f(t,x,v) \ dvdx +  \iint_{|x - v(t+\alpha)| > R} |x - v\alt{(t+\alpha)}|^{\ell} f(t,x,v) \ dvdx\\
& \leq & R^{p} M_{\ell-p}(t) +  R^{-q} M_{\ell+q}(t)
\end{eqnarray*}
for any $\ell \geq p \geq 0$ and $q\geq 0$.
Optimizing in $R$ yields $R = \left (\frac{M_{\ell+q}(t)}{M_{\ell-p}(t)} \right )^\frac{1}{p+q}$,
and the estimate follows with this choice of $R$.
\end{proof}

\begin{lemma}
\label{ME}
For any $\alt{k \geq 3}$, if $M_k(0) < \infty$ then for all $t \geq 0$
$$M_k(t)^\frac{1}{k-2} \leq M_k(0)^\frac{1}{k-2} + C\int_0^t (s+ \alpha) \| E(s) \|_\infty \ ds.$$
\end{lemma}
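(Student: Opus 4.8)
The plan is to differentiate the transported moment $M_k(t)=\iint |x-v(t+\alpha)|^k f(t,x,v)\,dv\,dx$ in time and exploit the cancellation that the weight $|x-v(t+\alpha)|^k$ is designed to produce. Differentiating under the integral sign generates three contributions: the $t$-derivative of the weight supplies a factor $-k(t+\alpha)|x-v(t+\alpha)|^{k-2}(x-v(t+\alpha))\cdot v$, while substituting $\partial_t f=-v\cdot\nabla_x f-E\cdot\nabla_v f$ and integrating the transport term by parts in $x$ supplies the opposite factor $+k(t+\alpha)|x-v(t+\alpha)|^{k-2}(x-v(t+\alpha))\cdot v$. These cancel exactly (equivalently, one may differentiate $M_k(t)=\iint|x|^k g\,dx\,dv$ and integrate the two $\rho$-terms arising from $\nabla_x\cdot\widetilde E$ and $\nabla_v\cdot\widetilde E$ against each other using \eqref{VPg}), leaving only the field term
\[
M_k'(t)=-k(t+\alpha)\iint |x-v(t+\alpha)|^{k-2}\bigl(x-v(t+\alpha)\bigr)\cdot E(t,x)\,f(t,x,v)\,dv\,dx.
\]
Taking absolute values and pulling $\|E(t)\|_\infty$ out gives the clean bound
\[
|M_k'(t)|\leq k(t+\alpha)\,\|E(t)\|_\infty\, M_{k-1}(t).
\]

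The next step is to absorb $M_{k-1}(t)$ into a power of $M_k(t)$. Applying Lemma \ref{LM} with $\ell=k-1$, $p=k-3$, $q=1$ (which is admissible precisely because $k\geq 3$) interpolates $M_{k-1}$ between $M_2$ and $M_k$:
\[
M_{k-1}(t)\leq M_2(t)^{\frac{1}{k-2}}\,M_k(t)^{\frac{k-3}{k-2}}.
\]
Since $d\geq 4$, Lemma \ref{mompot} yields $M_2(t)\lesssim 1$, so the previous two displays combine into the differential inequality
\[
M_k'(t)\lesssim (t+\alpha)\,\|E(t)\|_\infty\, M_k(t)^{\frac{k-3}{k-2}},
\]
with a constant depending on $k$, $d$, and the data through $M_2$.

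Finally, I would convert this into the stated integral bound via the substitution $y(t)=M_k(t)^{1/(k-2)}$. Because $\tfrac{d}{dt}\,y=\tfrac{1}{k-2}M_k^{(3-k)/(k-2)}M_k'$ and the exponent $\tfrac{3-k}{k-2}+\tfrac{k-3}{k-2}=0$, the power of $M_k$ cancels and one obtains $\tfrac{d}{dt}\,M_k(t)^{1/(k-2)}\lesssim (t+\alpha)\|E(t)\|_\infty$; integrating from $0$ to $t$ gives the claim. The main obstacles are not conceptual but technical: one must justify differentiating under the integral and the integrations by parts for $C^1_0$ solutions (the decay of $f$ as $|x|\to\infty$ and finiteness of the relevant moments handle the boundary terms), and one must propagate the finiteness of $M_k(t)$ itself, which follows from the differential inequality together with the local boundedness of $\|E(t)\|_\infty$ via a standard continuation argument. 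A minor point is the degeneracy of the map $y\mapsto y^{1/(k-2)}$ where $M_k$ vanishes; this is handled by working with the differential inequality for the nonnegative quantity $M_k$ directly (a power-type comparison argument) rather than formally differentiating at a zero of $M_k$.
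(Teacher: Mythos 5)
Your proposal is correct and follows essentially the same route as the paper: differentiate $M_k(t)$ (the transport contributions cancel, leaving only the field term), bound $|M_k'(t)| \lesssim (t+\alpha)\|E(t)\|_\infty M_{k-1}(t)$, interpolate $M_{k-1} \lesssim M_k^{(k-3)/(k-2)}$ via Lemma \ref{LM} and the uniform bound on $M_2$ from Lemma \ref{mompot}, and integrate the resulting differential inequality for $M_k^{1/(k-2)}$. The only difference is that you spell out the exact cancellation in $M_k'(t)$ and the technical justifications (differentiation under the integral, degeneracy at zeros of $M_k$) that the paper leaves implicit.
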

\begin{proof}
Taking a derivative of $M_k(t)$ gives
\begin{eqnarray*}
|M_k'(t) | & \lesssim & (t+\alpha) \left | \iint |x - v(t+\alpha)|^{k-2} (x -v(t+\alpha)) \cdot E(t,x) f(t,x,v) \ dv dx \right |\\
& \lesssim & (t+ \alpha) \| E(t) \|_\infty M_{k-1}(t).
\end{eqnarray*}
Using Lemma \ref{LM} for any $\ell \geq 2$ with $p=\ell-2$ and $q = 1$ yields
$$M_\ell(t) \leq CM_{\ell+1}(t)^\frac{\ell-2}{\ell-1}$$
as $M_2(t) \lesssim 1$ due to Lemma \ref{mompot}.
Then, taking $\ell=k-1$ gives
$$M_{k-1}(t) \lesssim M_k(t)^\frac{k-3}{k-2}.$$
Using this in the above inequality for the derivative then implies
$$|M_k'(t) |  \lesssim (t+ \alpha) \| E(t) \|_\infty M_k(t)^\frac{k-3}{k-2},$$
and thus
$$\left |\frac{d}{dt} \left ( M_k(t)^\frac{1}{k-2} \right ) \right |  \lesssim (t+ \alpha) \| E(t) \|_\infty. $$
Integrating yields the stated result, namely
$$M_k(t)^\frac{1}{k-2} \leq M_k(0)^\frac{1}{k-2} + C\int_0^t (s+ \alpha) \| E(s) \|_\infty \ ds.$$
\end{proof}

With these estimates established, we can now prove the first theorem.
\begin{proof}[Proof of Theorem \ref{T0}]
Assuming \eqref{Assumption} for some $a > \tilde{a}(d) := \frac{d^2-d-4}{d^2-2d-2}$,
we let $\epsilon = a - \tilde{a}(d) > 0$ so that
$$\|E(t) \|_\infty \lesssim (t+\alpha)^{-\tilde{a}(d) - \epsilon}.$$
Using Lemma \ref{ME} with $k=n$ and inserting the above field estimate gives
$$M_n(t)^\frac{1}{n-2} \lesssim M_n(0)^\frac{1}{n-2} + \int_0^t (s+ \alpha)^{1-\tilde{a}(d) - \epsilon} \ ds \lesssim M_n(0)^\frac{1}{n-2} + \max \left \{ 1, (t+ \alpha)^{2-\tilde{a}(d) - \epsilon} \right \} $$
and thus
$$M_n(t) \lesssim \max \{ 1, (t+\alpha)^r  \}$$
where
$$r = \left (2-\tilde{a}(d) - \epsilon \right )(n-2).$$
Next, we use Corollary \ref{EM} so that
$$\| E(t) \|_\infty \lesssim (t+\alpha)^{1-d}  \max \{ 1, (t+\alpha)^r  \}^\frac{(d-2)(d+1)}{d(n-2)} \lesssim  \max \{ (t+\alpha)^{1-d}, (t+\alpha)^s  \}$$
where
$$s = 1-d + r\frac{(d-2)(d+1)}{d(n-2)} = 1- d + \frac{ \left (1 - \epsilon-( \tilde{a}(d) - 1) \right ) (d-2)(d+1)}{d}.$$
Using the identity
$$(\tilde{a}(d) - 1)(d-2)(d+1) = d \tilde{a}(d) - 2,$$
a brief calculation shows that this exponent can be rewritten as
$$s = -\tilde{a}(d) - \epsilon \frac{(d-2)(d+1)}{d}.$$
Note that the original assumption on the decay of the field, namely \eqref{Assumption}, can be expressed as 
$$\|E(t) \|_\infty \lesssim \max \left \{ (t+\alpha)^{1-d}, (t+\alpha)^{-\tilde{a}(d) - \epsilon} \right \}.$$
Thus, we have achieved an improved estimate, given by
$$\|E(t) \|_\infty \lesssim \max \left \{ (t+\alpha)^{1-d}, (t+\alpha)^{-\tilde{a}(d) - \epsilon\frac{(d-2)(d+1)}{d}} \right \}$$
as
\begin{equation}
\label{ineq}
\frac{(d-2)(d+1)}{d} = \left ( 1- \frac{2}{d} \right )(d+1) \geq \frac{5}{2}
\end{equation}
for $d \geq 4$.
Iterating this process then gives
$$\|E(t) \|_\infty \lesssim \max \left \{ (t+\alpha)^{1-d}, (t+\alpha)^{-\tilde{a}(d) - \epsilon \left (\frac{(d-2)(d+1)}{d}\right )^k} \right \}$$
for any $k \in \mathbb{N}$.
Due to \eqref{ineq}, taking $k$ sufficiently large implies
$$(t+\alpha)^{-\tilde{a}(d) - \epsilon \left (\frac{(d-2)(d+1)}{d}\right )^k} \lesssim (t+\alpha)^{1-d},$$
and thus the sharp decay rate for the field is ultimately achieved, namely
$$\|E(t) \|_\infty \lesssim (t+\alpha)^{1-d}.$$
Lemma \ref{ME} then provides the moment bound, namely
$$M_n(t) \lesssim \left ( M_n(0)^\frac{1}{n-2} + \int_0^t (s+ \alpha) \| E(s) \|_\infty \ ds \right )^{n-2} \lesssim \left ( 1+ \int_0^t (s+\alpha)^{2-d} \ ds \right )^{n-2} \lesssim 1.$$
Finally, the bound on $M_k(t)$ for $0 \leq k \leq n$ is achieved via interpolation with $M_0(t) = \mcM$ and $M_n(t)$.
\end{proof}

\section{Asymptotic Behavior and Proof of Theorem \ref{T1}}
\label{sect:T2}

Next, we establish a number of lemmas that will culminate in the proof of Theorem \ref{T1}. 
In view of Theorem \ref{T0}, we assume throughout this section that the electric field decays at the rate stated in the conclusion of that result, namely
$$\|E(t) \|_\infty \lesssim (t+\alpha)^{1-d}$$
with  uniform bounds on moments
$$M_k(t) \lesssim 1$$
for all $0 \leq k \leq n$.
%

Prior to stating the lemmas, we first introduce some notation relating to the translated distribution function.
As mentioned in the introduction, we let
$$g(t,x,v) = f(t,x+v(t+\alpha), v).$$
From the original characteristics given by \eqref{char}, we define the new spatial characteristics associated to this distribution function by
$$\mcY(t, \tau, x, v) = \mcX(t, \tau, x, v) - (t+\alpha) \mcV(t, \tau, x, v)$$ 
with
$\mcY(\tau, \tau, x, v) = x - (\tau +\alpha)v$.

%
As our approach relies heavily upon the growth of the spatial moments and velocity derivatives of $g$, 
we further define the useful quantities
$$\mcG(t) 
= 1 + \sup_{x,v \in \bfR^d} \biggl ( \langle x \rangle^p g(t,x,v) \biggr ) ,$$
$$\mcGx(t) 
= 1 + \sup_{x,v \in \bfR^d} | \langle x \rangle^p \nabla_x g(t,x,v) | ,$$
and
$$\mcGv(t) 
= 1 + \sup_{x,v \in \bfR^d} | \langle x \rangle^p \nabla_v g(t,x,v) | .$$
Notice that
$$\mcG(0) = 1 + \sup_{x,v \in \bfR^d} | \langle x \rangle^p f_0(x+\alpha v,v) | = 1 + \sup_{x,v \in \bfR^d} | \langle x- \alpha v \rangle^p f_0(x,v) |$$
and similarly for $\mcGx(0)$ and $\mcGv(0)$; hence, these quantities are all initially finite due to the assumptions of the theorem.

Our first lemma uses the field decay to uniformly bound the moments of $g$ and obtain the sharp decay rate of the charge density.
\begin{lemma}
\label{momrho}
We have
$$\mcG(t) \lesssim 1 \qquad \mathrm{and} \qquad \| \rho(t) \|_\infty \lesssim (t+ \alpha)^{-d}.$$
\end{lemma}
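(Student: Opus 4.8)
The plan is to exploit that $g$ is transported along the characteristics of \eqref{VPg}, so its pointwise values are inherited from the initial datum $f_0$, and then to control the weight $\langle x \rangle^p$ by tracking the translated spatial characteristic $\mcY$ backward in time. First I would record that, since $g$ is constant along the flow $(\mcY,\mcV)$, tracing back from $(t,y,w)$ gives
$$g(t,y,w) = f_0\bigl(\mcX(0,t,y+w(t+\alpha),w),\, \mcV(0,t,y+w(t+\alpha),w)\bigr).$$
Because $\mcX(0,t,\cdot) - \alpha\mcV(0,t,\cdot) = \mcY(0,t,\cdot)$, the hypothesis $\langle x-\alpha v\rangle^p f_0 \lesssim 1$ (equivalently $\mcG(0) < \infty$) yields the pointwise bound
$$g(t,y,w) \lesssim \langle \mcY(0,t,y+w(t+\alpha),w)\rangle^{-p}.$$

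The core of the argument is then to compare $\mcY(0,t,\cdot)$ with $\mcY(t,t,\cdot)=y$. Differentiating the definition of $\mcY$ along \eqref{char} produces the cancellation
$$\frac{d}{ds}\mcY(s,t,\cdot) = \dot{\mcX} - \mcV - (s+\alpha)\dot{\mcV} = -(s+\alpha)E(s,\mcX(s,t,\cdot)),$$
so that, invoking the improved field decay $\|E(s)\|_\infty \lesssim (s+\alpha)^{1-d}$ furnished by Theorem \ref{T0},
$$|y - \mcY(0,t,\cdot)| \leq \int_0^t (s+\alpha)\|E(s)\|_\infty\, ds \lesssim \int_0^t (s+\alpha)^{2-d}\, ds \lesssim 1.$$
A short case split (according to whether $|y|$ is large or bounded) then gives $\langle y\rangle \lesssim \langle \mcY(0,t,\cdot)\rangle$, whence $\langle y\rangle^p g(t,y,w) \lesssim 1$ uniformly in $(y,w)$, and taking the supremum delivers $\mcG(t)\lesssim 1$.

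For the density bound I would use the reformulated representation $\rho(t,x) = (t+\alpha)^{-d}\int g\bigl(t,w,\tfrac{x-w}{t+\alpha}\bigr)\,dw$ from \eqref{VPg}. Estimating the integrand pointwise by $g(t,w,\cdot)\leq \mcG(t)\langle w\rangle^{-p}$ and using the bound $\mcG(t)\lesssim 1$ just obtained, the $w$-integral $\int \langle w\rangle^{-p}\,dw$ converges since $p > d+1 > d$, so that $\|\rho(t)\|_\infty \lesssim (t+\alpha)^{-d}$ uniformly in $x$.

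The main obstacle is the second step: showing that the translated characteristic $\mcY$ drifts by only a bounded amount over the entire time interval. This rests on the integrability of $(s+\alpha)\|E(s)\|_\infty \lesssim (s+\alpha)^{2-d}$, which is exactly where the dimension restriction $d \geq 4$ (making the exponent $2-d<-1$) and the improved decay of Theorem \ref{T0} are essential, with $\alpha>0$ keeping the integrand finite near $s=0$. Everything else reduces to routine interpolation and integration estimates.
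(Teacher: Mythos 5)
Your proof is correct, but it takes a different route from the paper's. The paper works with the weighted quantity $\langle x \rangle^p g$ in an Eulerian fashion: it applies the translated Vlasov operator $\mfV$ to $\langle x \rangle^p g$, producing the source term $-p(t+\alpha)\langle x \rangle^{p-2} x \cdot E\, g$, integrates along characteristics to get the integral inequality
$$\| \langle x \rangle^p g(t) \|_\infty \leq \| \langle x - \alpha v \rangle^p f_0 \|_\infty + C\int_0^t (s+\alpha)^{2-d} \| \langle x \rangle^p g(s) \|_\infty \, ds,$$
and closes with Gronwall's inequality. You instead argue in a purely Lagrangian way: $g$ is constant along the flow $(\mcY,\mcV)$, the identity $\frac{d}{ds}\mcY = -(s+\alpha)E(s,\mcX)$ shows the translated characteristic drifts by at most $\int_0^t (s+\alpha)^{2-d}\,ds \lesssim 1$, and the weight $\langle y \rangle^p$ can therefore be exchanged for $\langle \mcY(0)\rangle^p$ at the price of a constant, with no Gronwall needed. (The density estimate is then identical to the paper's.) Both arguments rest on exactly the same analytic fact --- integrability of $(s+\alpha)\|E(s)\|_\infty$ for $d \geq 4$ --- but yours is more elementary and gives a slightly stronger conclusion: a pointwise comparison $g(t,y,w) \lesssim \langle \mcY(0)\rangle^{-p} \lesssim \langle y \rangle^{-p}$, i.e., $g(t)$ inherits the spatial decay profile of $f_0$ up to constants, rather than just a bound on the sup norm. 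What the paper's template buys in exchange is reusability: the same ``apply $\mfV$, integrate, Gronwall'' scheme is deployed again in Lemma \ref{DEf} for $\nabla_x g$ and $\nabla_v g$, where your constancy-along-characteristics argument would not apply because derivatives of $g$ satisfy an \emph{inhomogeneous} transport equation and are not conserved along the flow.
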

\begin{proof}
Define the operator $\mfV$ by
$$\mfV h = \partial_t h - (t+\alpha) E(t, x+ v(t+\alpha)) \cdot \nabla_x h + E(t, x+ v(t+\alpha)) \cdot \nabla_v h $$
for any $h = h(t,x,v)$ so that $\mfV g = 0$.
Then, applying the operator to $\langle x \rangle^p g$ yields
$$\mfV \left (\langle x \rangle^p g \right ) = - p (t+\alpha) \langle x \rangle^{p-2}  x \cdot E(t, x+ v(t+\alpha)) g(t,x,v).$$
Inverting the operator via integration along characteristics then gives
$$\langle x \rangle^p g(t,x,v) = \langle \mcY(0) \rangle^p g(0,\mcY(0), \mcV(0)) - p \int_0^t (s+\alpha) \langle \mcY(s) \rangle^{p-2}  \mcY(s) \cdot E(s, \mcY(s))  g(s, \mcY(s), \mcV(s)) \ ds,  $$
and this further yields
\begin{eqnarray*}
\| \langle x \rangle^p g(t) \|_\infty & \leq &  \| \langle x \rangle^p g(0) \|_\infty + C\int_0^t (s + \alpha) \|E(s) \|_\infty \| \langle x \rangle^p g(s) \|_\infty \ ds\\
 & \leq &  \| \langle x - \alpha v \rangle^p f_0 \|_\infty + C\int_0^t (s + \alpha)^{2-d} \| \langle x \rangle^p g(s) \|_\infty \ ds.
\end{eqnarray*}
Applying Gronwall's inequality, we find
$$\| \langle x \rangle^p g(t) \|_\infty \lesssim \exp \left ( \int_0^t (s + \alpha)^{2-d} \ ds \right ) \lesssim 1,$$
which gives the former result.

With this, we estimate the charge density using the change of variables $w = x -v(t + \alpha)$ so that
\begin{eqnarray*}
\rho(t,x)  & = & \int g(t,x-v(t+\alpha), v) \ dv\\
& = & (t + \alpha)^{-d} \int g \left (t,w, \frac{x-w}{t+\alpha} \right ) \ dw\\
& \leq & (t + \alpha)^{-d} \int \langle w \rangle^{-p} \sup_{\xi \in \bfR^d}\biggl ( \langle w \rangle^p g \left (t,w, \xi \right ) \biggr ) \ dw\\
& \leq & (t + \alpha)^{-d}  \mcG(t)  \left ( \int \langle w \rangle^{-p} \ dw\right ) \\
& \lesssim & (t + \alpha)^{-d}, 
\end{eqnarray*}
which provides the latter result.
\end{proof}

The decay of the field and charge density lead directly to estimates of field derivatives and derivatives of the distribution function.
In particular, we show that derivatives of $g$ are uniformly bounded.
\begin{lemma}
\label{DEf}
We have the estimates 
$$\|\nabla_x E(t) \|_\infty \lesssim (t+\alpha)^{-d}\ln(1+t+\alpha) , \qquad \mcGv(t) \lesssim 1, \qquad \mathrm{and} \qquad \mcGx(t) \lesssim 1.$$
\end{lemma}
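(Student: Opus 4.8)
The three estimates are naturally coupled through the transport operator $\mfV$, so the plan is to bound them in the order $\|\nabla_x E\|_\infty$ first, then $\mcGv(t)$, and finally $\mcGx(t)$, since the derivative bounds on $g$ will feed back into a self-consistent Gronwall argument. For the field derivative, I would differentiate the kernel representation $E(t,x) = \frac{1}{d\omega_d}\frac{x}{|x|^d}\star\rho(t,x)$, which produces a kernel homogeneous of degree $-d$ in the spatial convolution. Such a kernel is not locally integrable at the origin, so the estimate of $\|\nabla_x E\|_\infty$ must be handled by the standard Calderón–Zygmund-type splitting into regions near and far from the singularity (exactly as in the proof of Lemma \ref{LE}), using both $\|\rho(t)\|_\infty$ and $\|\rho(t)\|_1 = \mcM$ together with a logarithmic cutoff. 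Invoking $\|\rho(t)\|_\infty \lesssim (t+\alpha)^{-d}$ from Lemma \ref{momrho} and $\|\rho(t)\|_1\lesssim 1$, the optimization of the cutoff radius yields the claimed $(t+\alpha)^{-d}\ln(1+t+\alpha)$ rate, where the logarithm is the typical signature of the borderline (degree $-d$) kernel.

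For $\mcGv(t)$ and $\mcGx(t)$, I would apply $\mfV$ to $\langle x\rangle^p\nabla_v g$ and $\langle x\rangle^p\nabla_x g$ respectively. Differentiating the Vlasov equation $\mfV g = 0$ in $v$ and in $x$ generates commutator terms: the operator $\mfV$ contains the coefficient $(t+\alpha)E(t,x+v(t+\alpha))$ in the $\nabla_x$ direction and $E(t,x+v(t+\alpha))$ in the $\nabla_v$ direction, and differentiating these coefficients produces factors of $(t+\alpha)\nabla_x E$ multiplied by $\nabla_v g$ or $\nabla_x g$. The key structural point is that the $v$-derivative of $E(t,x+v(t+\alpha))$ carries an \emph{extra} factor of $(t+\alpha)$ relative to the $x$-derivative, so the worst commutator term in the $\nabla_v g$ equation scales like $(t+\alpha)^2\|\nabla_x E\|_\infty \lesssim (t+\alpha)^{2-d}\ln(1+t+\alpha)$, which is integrable in time for $d\geq 4$. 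Integrating along the characteristics $(\mcY,\mcV)$ as in Lemma \ref{momrho} and combining the coupled inequalities for $\mcGx$ and $\mcGv$, Gronwall's inequality then closes both bounds uniformly in time.

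The main obstacle is the estimate of $\|\nabla_x E\|_\infty$: because the singular kernel is exactly borderline (homogeneous of degree $-d$), a naive bound diverges, and the logarithmic factor must be extracted carefully through the near/far splitting and the optimal choice of cutoff radius balancing the $L^1$ and $L^\infty$ norms of $\rho$. The secondary subtlety is bookkeeping the powers of $(t+\alpha)$ in the commutator terms so that the resulting time integrals converge; in particular one must verify that the $(t+\alpha)^2$ amplification in the $\nabla_v g$ equation is still beaten by the $(t+\alpha)^{-d}$ decay of $\nabla_x E$, which requires precisely $d\geq 4$ (the threshold making $(t+\alpha)^{2-d}\ln(1+t+\alpha)$ integrable).
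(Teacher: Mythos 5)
There is a genuine gap, and it sits in the very first step on which your whole ordering rests. You claim that $\|\nabla_x E(t)\|_\infty$ can be bounded \emph{first}, by a near/far splitting of the differentiated kernel using only $\|\rho(t)\|_1$ and $\|\rho(t)\|_\infty$, with the logarithm emerging from optimizing the cutoff radius. This cannot work: the kernel $\partial_{x_j}\bigl(\tfrac{x_i-y_i}{|x-y|^d}\bigr)$ is homogeneous of degree $-d$, so the contribution near the singularity is
$$\int_{|x-y|<R_1} |x-y|^{-d}\,\rho(t,y)\,dy \;\lesssim\; \|\rho(t)\|_\infty \int_0^{R_1} r^{-1}\,dr \;=\; \infty$$
for \emph{every} $R_1>0$; no choice of cutoff balancing $\|\rho\|_1$ against $\|\rho\|_\infty$ repairs this (this is the familiar fact that singular integrals are not bounded on $L^\infty$). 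Some additional regularity of $\rho$ must enter. The paper's proof uses the identity $\partial_{x_j}\bigl(\tfrac{x_i-y_i}{|x-y|^d}\bigr) = -\partial_{y_j}\bigl(\tfrac{x_i-y_i}{|x-y|^d}\bigr)$ and integrates by parts in the innermost ball, producing a term $\lesssim \|\nabla_x\rho(t)\|_\infty R_1$ plus a boundary term; the logarithm then comes from the middle annulus $R_1<|x-y|<R_2$, with $R_1$ chosen in terms of $\|\nabla_x\rho(t)\|_\infty$, yielding
$$\|\nabla_x E(t)\|_\infty \lesssim \left(1+\ln^*\!\left(\frac{\|\nabla_x\rho(t)\|_\infty}{\|\rho(t)\|_\infty^{(d+1)/d}}\right)\right)\|\rho(t)\|_\infty .$$

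This changes the logical architecture in a way your plan misses. Since $\|\nabla_x\rho(t)\|_\infty \lesssim (t+\alpha)^{-d}\,\mcGx(t)$, the field-derivative bound depends (logarithmically) on the very quantity $\mcGx(t)$ you intend to control afterwards, so the three estimates are genuinely circular and cannot be established sequentially. The paper resolves the circularity by exploiting that the dependence is only logarithmic: writing the coupled integral inequalities for $\mcGv$ and $\mcGx$ with coefficient $(s+\alpha)^{1-q}\bigl(1+\ln(\mcGx(s))\bigr)$, setting $\mcD(t)=e^2+\mcGx(t)+\mcGv(t)$, one gets $\mcD(t)\lesssim 1+\int_0^t (s+\alpha)^{2-q}\,\mcD(s)\ln(\mcD(s))\,ds$, and a logarithmic (Osgood-type) Gronwall inequality gives $\mcD(t)\lesssim \exp\bigl(\exp\bigl(\int_0^t(s+\alpha)^{2-q}ds\bigr)\bigr)\lesssim 1$ because $q$ can be taken close to $d\geq 4$. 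Only \emph{after} $\mcGx(t)\lesssim 1$ is known does the stated bound $\|\nabla_x E(t)\|_\infty\lesssim (t+\alpha)^{-d}\ln(1+t+\alpha)$ follow, by substituting back. Your description of the commutator structure (the extra $(t+\alpha)$ in the $\mcGv$ equation, the term $(t+\alpha)^2\|\nabla_x E\|_\infty$, and the $d\geq 4$ integrability threshold) is correct, but with the field derivative unavailable a priori, a plain linear Gronwall argument of the kind you propose does not close; the nonlinear $\mcD\ln\mcD$ bootstrap is the essential missing idea.
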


\begin{proof}
We will establish an extension of the well-known three-dimensional estimate of field derivatives (c.f. \cite[p. 122-123]{Glassey}) to higher dimensions.
In particular, we apply a derivative to the field, use the identity
$$\partial_{x_j} \left ( \frac{x_i - y_i}{|x - y|^d} \right ) = - \partial_{y_j} \left ( \frac{x_i - y_i}{|x - y|^d} \right ),$$
and integrate by parts to find for any $0 < R_1 < R_2$
\begin{eqnarray*}
\partial_{x_j} E_i(t,x) & = & C \int_{\mathbb{R}^d} \partial_{x_j} \left ( \frac{x_i - y_i}{|x - y|^d} \right ) \rho(t,y) \ dy\\
& = & C \int_{\mathbb{R}^d} \frac{x_i - y_i}{|x - y|^d} \partial_{y_j}  \rho(t,y) \ dy\\
& = & C \int_{|x-y| < R_1} \frac{x_i - y_i}{|x - y|^d} \partial_{y_j}  \rho(t,y) \ dy + C \int_{|x-y| > R_1} \frac{x_i - y_i}{|x - y|^d} \partial_{y_j}  \rho(t,y) \ dy\\
& = & C \int_{|x-y| < R_1} \frac{x_i - y_i}{|x - y|^d} \partial_{y_j}  \rho(t,y) \ dy + C \int_{|x-y| = R_1} \frac{x_i - y_i}{|x - y|^d}  \rho(t,y) \frac{x_j-y_j}{|x-y|}\ dS_y\\
& \ & + C \int_{R_1 < |x-y| < R_2} \partial_{y_j} \left ( \frac{x_i - y_i}{|x - y|^d} \right )  \rho(t,y) \ dy + C \int_{|x-y| > R_2} \partial_{y_j} \left ( \frac{x_i - y_i}{|x - y|^d} \right ) \rho(t,y) \ dy\\
& =: & I - IV.
\end{eqnarray*}
Next, we estimate each contribution so that
$$I \lesssim \| \nabla_x \rho(t) \|_\infty   \left ( \int_{|x-y| < R_1} |x-y|^{1-d} \ dy \right ) \lesssim \| \nabla_x \rho(t) \|_\infty R_1,$$
and
$$II \lesssim \|\rho(t) \|_\infty \left ( \int_{|x-y| = R_1} |x-y|^{1-d} \ dS_y \right ) \lesssim \|\rho(t) \|_\infty.$$
To estimate the final two terms, we use
$$\left | \partial_{y_j} \left ( \frac{x_i - y_i}{|x - y|^d} \right ) \right | \leq C|x-y|^{-d}$$
to find
$$III \lesssim \|\rho(t) \|_\infty \left ( \int_{R_1 < |x-y| < R_2} |x-y|^{-d} \ dy \right ) \lesssim \|\rho(t) \|_\infty \ln^* \left ( \frac{R_2}{R_1} \right ),$$ 
and
$$IV \lesssim R_2^{-d} \| \rho(t) \|_1 \lesssim R_2^{-d}$$
where
$$\ln^*(s) =
\begin{cases}
0, & \mathrm{if} \ s \leq 1\\
\ln(s), & \mathrm{if} \ s \geq 1.
\end{cases}
$$
Taking 
$$R_1 = \frac{ \| \rho(t) \|_\infty}{\|\nabla_x \rho(t) \|_\infty} \qquad \mathrm{and} \qquad R_2 = \| \rho(t) \|_\infty^{-\frac{1}{d}}$$
yields
\begin{equation}
\label{gradEstd}
\|\nabla_x E(t) \|_\infty \lesssim \left ( 1 + \ln^* \left (\frac{\|\nabla_x \rho(t)\|_\infty}{\|\rho(t) \|_\infty^\frac{d+1}{d}} \right ) \right ) \| \rho(t) \|_\infty.
\end{equation}
We note that this bound is increasing in the contribution of $\|\rho(t)\|_\infty$, and using Lemma \ref{momrho} in \eqref{gradEstd} yields
$$
\|\nabla_x E(t) \|_\infty \lesssim \left ( 1 + \ln^* \left ((t+\alpha) ^{d+1} \|\nabla_x \rho(t)\|_\infty  \right ) \right )(t+\alpha)^{-d}.
$$
As in the proof of Lemma \ref{momrho}, we can bound the derivative of $\rho$ using moments of derivatives of $g$ so that
$$\left |\nabla_x \rho(t,x) \right | \lesssim (t+ \alpha)^{-d} \int \left |\nabla_x g \left (t,w,\frac{x-w}{t+\alpha} \right ) \right | \ dw \lesssim (t+ \alpha)^{-d} \mcGx(t),$$
and thus
\begin{equation}
\label{DEdecayln}
\|\nabla_x E(t) \|_\infty \lesssim \biggl ( 1 + \ln \left ( \mcGx(t) \right ) \biggr) (t+\alpha)^{-q}
\end{equation}
for any $0<q<d$ that can be chosen as close to $d$ as desired.
%

Next, we estimate derivatives of $g$ in order to close the argument.
Denoting the translated Vlasov operator by $\mfV$ as before so that
$$\mfV h = \partial_t h - (t+\alpha) E(t, x+ v(t+\alpha)) \cdot \nabla_x h + E(t, x+ v(t+\alpha)) \cdot \nabla_v h,$$
we take derivatives in the Vlasov equation and apply $\langle x \rangle^p$ to find
$$\mfV \biggl (\langle x \rangle^p \partial_{v_k} g \biggr ) = - p (t+\alpha) \langle x \rangle^{p-2}  x \cdot E(t, x+ v(t+\alpha)) \partial_{v_k} g(t,x,v) + (t+\alpha)\left ( (t+\alpha)  \nabla_x g - \nabla_v g \right ) \cdot \partial_{x_k}E(t, x+ v(t+\alpha)).$$
Inverting the operator by integrating along characteristics and taking supremums then gives
\begin{eqnarray*}
\| \langle x \rangle^p \partial_{v_k} g(t)\|_\infty & \leq & \| \langle x \rangle^p \partial_{v_k} g(0)\|_\infty +C \int_0^t (s+\alpha) \| E(s)\|_\infty  \|\langle x \rangle^{p} \nabla_vg(s)\|_\infty\ ds\\
& \ & + \int_0^t (s+ \alpha) \|\nabla_x E(s) \|_\infty \biggl ( (s+\alpha)\|\nabla_x g(s) \|_\infty + \|\nabla_v g(s)\|_\infty \biggr ) \ ds\\
& \leq & \mcGv(0)  +C \int_0^t (s+\alpha)^{2-d}  \mcGv(s) ds\\
& \ & + \int_0^t (s+ \alpha)^{1-q}\biggl ( 1 + \ln \left ( \mcGx(s) \right ) \biggr) \biggl ( (s+\alpha) \mcGx(s) + \mcGv(s) \biggr ) \ ds
\end{eqnarray*}
where we have used \eqref{DEdecayln} to estimate field derivatives.
Summing over $k=1,...,d$ gives
\begin{equation}
\label{Gv1}
\mcGv(t) \lesssim 1 +  \int_0^t (s+ \alpha)^{2-q}\biggl ( 1 + \ln \left ( \mcGx(s) \right ) \biggr) \biggl (\mcGx(s) + \mcGv(s) \biggr ) \ ds.
\end{equation}

We estimate $x$ derivatives in the same manner to find
$$\mfV \biggl (\langle x \rangle^p \partial_{x_k} g \biggr ) = - p (t+\alpha) \langle x \rangle^{p-2}  x \cdot E(t, x+ v(t+\alpha)) \partial_{x_k} g(t,x,v) + \left ( (t+\alpha)  \nabla_x g - \nabla_v g \right ) \cdot \partial_{x_k}E(t, x+ v(t+\alpha)),$$
and thus
\begin{equation}
\label{Gx1}
\mcGx(t) \lesssim 1 +  \int_0^t (s+ \alpha)^{1-q}\biggl ( 1 + \ln \left ( \mcGx(s) \right ) \biggr) \biggl (\mcGx(s) + \mcGv(s) \biggr ) \ ds.
\end{equation}
Defining
$$\mcD(t) = e^2 + \mcGx(t) + \mcGv(t)$$
and adding \eqref{Gv1} and \eqref{Gx1} yields
$$\mcD(t) \lesssim 1 +  \int_0^t (s+ \alpha)^{2-q} \mcD(s)\ln \left (\mcD(s) \right )\ ds.$$
Invoking a variant of Gronwall's inequality then yields
$$\mcD(t) \lesssim \exp \left ( \exp \left ( \int_0^t (s+\alpha)^{2-q} \ ds \right ) \right ) \lesssim 1$$
as $q$ is sufficiently close to $d \geq 4$. 
As $\mcD(t)$ is bounded, we find
$$\mcGv(t) \lesssim 1 \qquad \mathrm{and} \qquad \mcGx(t) \lesssim 1,$$
and the second and third conclusions follow.
Additionally, the first conclusion is obtained upon using the bound on $\mcGx(t)$ within the estimate of $\|\nabla_x E(t) \|_\infty$ given by \eqref{DEdecayln}.
\end{proof}

%

\subsection{Convergence of the spatial average}
\label{sect:vellim}

Because the field and charge density decay rapidly in time and velocity derivatives of $g$ are uniformly bounded, we can establish the convergence of spatial averages.
\begin{lemma}
\label{Funif}
There exists a continuous $F_\infty \in L^1(\bfR^d) \cap L^\infty(\bfR^d)$ such that
$$F(t,v) = \int f(t,x, v) \ dx$$
satisfies $F(t,v) \to F_\infty(v)$ in $C(\bfR^d)$ as $t \to \infty$ with
$$\| F(t) - F_\infty \|_\infty \lesssim (t+\alpha)^{2-d}.$$
\end{lemma}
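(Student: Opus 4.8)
The plan is to show that $F(t,v)$ is Cauchy in $t$ (uniformly in $v$) by controlling the time derivative $\partial_t F(t,v)$ and showing it is integrable in $t$. The natural object to differentiate is the spatial average expressed in terms of $g$. Writing
$$F(t,v) = \int f(t,x,v)\,dx = \int g(t,x-v(t+\alpha),v)\,dx = \int g(t,w,v)\,dw,$$
where the last equality follows from the translation-invariance of Lebesgue measure (the shift $x \mapsto x - v(t+\alpha)$ has unit Jacobian), so that $F(t,v)$ is simply the spatial integral of $g$ in its first slot. The advantage of this representation is immediate: the $t$-dependence is now entirely inside $g$, and we may use the transport equation \eqref{VPg} satisfied by $g$ together with the uniform bounds $\mcG(t) \lesssim 1$ from Lemma \ref{momrho} and $\mcGx(t) \lesssim 1$ from Lemma \ref{DEf}.

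First I would compute $\partial_t F(t,v) = \int \partial_t g(t,w,v)\,dw$ and substitute $\partial_t g$ from the $g$-equation, namely $\partial_t g = (t+\alpha)E(t,w+v(t+\alpha))\cdot\nabla_w g - E(t,w+v(t+\alpha))\cdot\nabla_v g$. The first term integrates to zero in $w$ after integrating by parts (it is a perfect spatial divergence, since $E$ depends on $w$ only through the full argument and the boundary terms vanish by the spatial decay $\langle w\rangle^p g \lesssim 1$ with $p > d+1$). Thus
$$\partial_t F(t,v) = -\int E(t,w+v(t+\alpha))\cdot \nabla_v g(t,w,v)\,dw.$$
Estimating crudely via $\|E(t)\|_\infty \lesssim (t+\alpha)^{1-d}$ and $\int |\nabla_v g|\,dw \lesssim \int \langle w\rangle^{-p}\,dw\,\cdot\,\mcGv(t) \lesssim 1$ (using $p > d$ for integrability and $\mcGv \lesssim 1$), this yields $|\partial_t F(t,v)| \lesssim (t+\alpha)^{1-d}$. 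Integrating from $t$ to $\infty$ gives a Cauchy estimate of order $(t+\alpha)^{2-d}$, which is exactly the claimed rate since $d \geq 4$ makes $1-d < -1$ so the tail integral converges. The uniform limit $F_\infty(v) := \lim_{t\to\infty} F(t,v)$ then exists in $C(\bfR^d)$, and passing to the limit in $\| F(t)-F(\tau)\|_\infty \lesssim (t+\alpha)^{2-d}$ as $\tau \to \infty$ delivers the stated bound.

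It remains to verify the regularity and integrability of $F_\infty$. Continuity of each $F(t,\cdot)$ and uniform convergence give $F_\infty \in C(\bfR^d)$ for free. The $L^\infty$ bound follows because $\|F(t)\|_\infty \leq \int \langle w\rangle^{-p}\,dw\,\cdot\,\mcG(t) \lesssim 1$ uniformly, hence $F_\infty \in L^\infty$; the $L^1$ bound follows from $\|F(t)\|_1 = \iint g(t,w,v)\,dw\,dv = \iint f_0 = \mcM$ (conservation of total charge) together with Fatou's lemma applied to the pointwise limit. The main obstacle I anticipate is justifying the integration by parts and the vanishing of the first transport term rigorously: one must confirm that the $w$-decay of $g$ and $\nabla_w g$ (encoded in $\mcG, \mcGx \lesssim 1$ through the weights $\langle w\rangle^{-p}$) is strong enough to kill boundary contributions and make all $w$-integrals absolutely convergent, which is where the hypothesis $p > d+1$ is used rather than merely $p > d$. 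Once these integrability facts are in place the argument is a clean one-step Cauchy estimate.
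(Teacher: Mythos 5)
Your overall strategy (differentiate $F$ in time, show $\|\partial_t F(t)\|_\infty$ decays like $(t+\alpha)^{1-d}$ and is therefore integrable, conclude by a Cauchy-in-time argument) is the same as the paper's, but there is a genuine error at the key step. The claim that the transport term
$$(t+\alpha)\int E(t,w+v(t+\alpha))\cdot\nabla_w g(t,w,v)\,dw$$
integrates to zero "because it is a perfect spatial divergence" is false. A perfect divergence would require $E$ to be independent of $w$ (or $w$-divergence-free); here $E$ is evaluated at $w+v(t+\alpha)$, and by the Poisson equation $\nabla \cdot E = \rho$ its $w$-divergence equals $\rho(t,w+v(t+\alpha))$, which does not vanish. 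Integration by parts therefore leaves the nonzero remainder
$$-(t+\alpha)\int \rho(t,w+v(t+\alpha))\,g(t,w,v)\,dw,$$
which is exactly the term the paper keeps and estimates. Your parenthetical justification --- that $E$ depends on $w$ only through the full argument --- is precisely the reason the term survives, not the reason it disappears.

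Fortunately the error is repairable without changing the claimed rate: by Lemma \ref{momrho}, $\|\rho(t)\|_\infty \lesssim (t+\alpha)^{-d}$, and by your own direct bound $F(t,v) \leq \mcG(t)\int \langle w \rangle^{-p}\,dw \lesssim 1$, the dropped term is controlled by $(t+\alpha)\|\rho(t)\|_\infty F(t,v) \lesssim (t+\alpha)^{1-d}$, i.e. it is of the same size as the term you retained. (The paper obtains the uniform bound on $\|F(t)\|_\infty$ by Gronwall's inequality; your shortcut via $\mcG(t) \lesssim 1$ is also valid and slightly simpler.) Once this term is reinstated and estimated in that way, your argument gives $\|\partial_t F(t)\|_\infty \lesssim (t+\alpha)^{1-d}$, and the remainder of your proof --- the tail integral giving the $(t+\alpha)^{2-d}$ Cauchy estimate, continuity of $F_\infty$ from uniform convergence, the $L^\infty$ bound, and the $L^1$ bound via charge conservation and Fatou --- goes through as written and matches the paper's conclusion.
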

\begin{proof}
Upon integrating the Vlasov equation of \eqref{VPg} in $x$ and integrating by parts, we find 
\begin{eqnarray*}
\left | \partial_t \int g(t,x,v) \ dx \right | & = & \left |  \int E(t, x+v(t+\alpha)) \cdot ((t+\alpha) \nabla_x - \nabla_v) g(t,x,v) \ dx \right |\\
& = & \left |  (t+\alpha)  \int \rho(t, x+v(t+\alpha)) g(t,x,v) \ dx +  \int E(t, x+v(t+\alpha)) \cdot \nabla_v g(t,x,v) \ dx \right |\\
& \lesssim & (t+\alpha)  \Vert \rho(t) \Vert_\infty F(t,v) + \Vert E(t) \Vert_\infty \mcG_v(t).
\end{eqnarray*}
Thus, we use Lemmas \ref{momrho} and \ref{DEf} to find
$$\left | \partial_t F(t,v) \right | \lesssim (t+\alpha)^{1-d} F(t,v) + (t+\alpha)^{1-d}.$$
As 
$\|F(0)\|_\infty  \leq \mcG(0)$
and the latter term above is integrable in time, we find
$$F(t,v) \leq F(0,v) + \int_0^t \left | \partial_t F(s,v) \right | \ ds \lesssim 1 + \int_0^t (s+\alpha)^{1-d} F(s, v) \ ds,$$
and after taking the supremum and invoking Gronwall's inequality, this yields
\begin{equation}
\label{FLinfty}
\| F(t) \|_\infty \lesssim \exp \left (\int_0^t (s+\alpha)^{1-d} \ ds \right ) \lesssim 1.
\end{equation}
Returning to the estimate of $\partial_t F$, we use the uniform bound on $\|F(t)\|_\infty$ to find
$$\left | \partial_t F(t,v) \right |  \lesssim (t+\alpha)^{1-d},$$
which implies that $ \| \partial_t F(t)\|_\infty$ is integrable.
This bound then establishes the estimate for $s \geq t$
$$\Vert F(t) - F(s) \Vert_\infty = \left \Vert \int_s^t \partial_t F(\tau) \ d\tau \right \Vert_\infty
\leq \int_t^s \Vert \partial_t F(\tau) \Vert_\infty \ d\tau
 \lesssim (t+\alpha)^{2-d},$$
and taking $s \to \infty$ establishes the limit. 
More precisely, as $F(t,v)$ is continuous and the limit is uniform, there is $F_\infty \in C(\bfR^d)$ such that
$$ \| F(t) - F_\infty \|_\infty  \lesssim (t+\alpha)^{2-d}.$$
Furthermore, as $\int F(t,v) \ dv  = \mcM$ for every $t \geq 0$, we have $F_\infty \in L^1(\bfR^d)$ with
$ 0 \leq \int F_\infty(v) \ dv \leq \mcM$.

\end{proof}

\subsection{Convergence of the field and macroscopic densities}
\label{sect:fieldconv}

Now that we have shown the convergence of $F(t,v)$, we establish the precise asymptotic profile of the field and the charge and current densities.
From the limiting density $F_\infty(v)$, we define its induced electric field by
$$E_\infty(v) = \nabla_v (\Delta_v)^{-1} F_\infty(v) = \frac{1}{d\omega_d} \int \frac{\xi}{|\xi|^d} F_\infty(v -\xi) \ d\xi$$
for every $v \in \bfR^d$.
To ensure the necessary regularity of the limiting field we note that due to Lemma \ref{LE} $\| E_\infty\|_\infty < \infty$, as $F_\infty \in L^1(\bfR^d) \cap L^\infty(\bfR^d)$ by Lemma \ref{Funif}.
With this, we establish a refined estimate of the electric field.

\begin{lemma}
\label{LField}
We have
$$\sup_{x \in \bfR^d} \left | (t+\alpha)^{d-1} E(t,x) - E_\infty\left (\frac{x}{t+\alpha} \right) \right | \lesssim (t+\alpha)^\frac{1-d}{d} $$
\end{lemma}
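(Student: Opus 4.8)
The plan is to exploit the velocity-convolution structure of the field in the comoving frame and thereby reduce the claim to an interpolation estimate for the convolution of the singular kernel $\xi/|\xi|^d$ against an explicit error density. First I would rewrite the field using the reformulation \eqref{VPg}: evaluating its field formula at $v=0$ (equivalently, applying the change of variables behind the reformulation directly to $E(t,x)$) gives
$$(t+\alpha)^{d-1}E(t,x) = \frac{1}{d\omega_d}\int \frac{\xi}{|\xi|^d}\left(\int g\left(t,w,\frac{x-w}{t+\alpha}-\xi\right)dw\right)d\xi,$$
while, by definition,
$$E_\infty\left(\frac{x}{t+\alpha}\right) = \frac{1}{d\omega_d}\int\frac{\xi}{|\xi|^d}F_\infty\left(\frac{x}{t+\alpha}-\xi\right)d\xi.$$
Subtracting and setting
$$h(t,x,\xi) = \int g\left(t,w,\frac{x-w}{t+\alpha}-\xi\right)dw - F_\infty\left(\frac{x}{t+\alpha}-\xi\right),$$
the quantity to estimate, uniformly in $x$, is $\frac{1}{d\omega_d}\int\frac{\xi}{|\xi|^d}\,h(t,x,\xi)\,d\xi$.

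Next I would establish the two bounds $\|h(t,x,\cdot)\|_\infty \lesssim (t+\alpha)^{-1}$ and $\|h(t,x,\cdot)\|_1\lesssim 1$. For the sup bound, I split $h$ by inserting the intermediate quantity $F(t,\frac{x}{t+\alpha}-\xi) = \int g(t,w,\frac{x}{t+\alpha}-\xi)\,dw$. The resulting $F-F_\infty$ piece is $\lesssim(t+\alpha)^{2-d}\lesssim(t+\alpha)^{-1}$ by Lemma~\ref{Funif}, while the remaining piece measures the velocity shift $w/(t+\alpha)$; by the fundamental theorem of calculus in $v$ it is bounded by $(t+\alpha)^{-1}\int|w|\,|\nabla_v g(t,w,\cdot)|\,dw\lesssim (t+\alpha)^{-1}\mcGv(t)\int|w|\langle w\rangle^{-p}\,dw\lesssim(t+\alpha)^{-1}$, using $\mcGv(t)\lesssim1$ from Lemma~\ref{DEf} and the integrability $\int|w|\langle w\rangle^{-p}\,dw<\infty$, which holds precisely because $p>d+1$. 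The $L^1$ bound follows from Tonelli and $g\ge0$: integrating the first term of $h$ in $\xi$ returns $\mcM$, and $\|F_\infty\|_1\le\mcM$.

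Finally I would treat the singular convolution by a near/far decomposition exactly as in Lemma~\ref{LE}: for any $R>0$,
$$\left|\int\frac{\xi}{|\xi|^d}h\,d\xi\right|\lesssim \|h\|_\infty\int_{|\xi|<R}|\xi|^{1-d}\,d\xi + R^{1-d}\|h\|_1\lesssim \|h\|_\infty R + \|h\|_1 R^{1-d}.$$
Optimizing with $R=(\|h\|_1/\|h\|_\infty)^{1/d}$ yields the bound $\|h\|_\infty^{(d-1)/d}\|h\|_1^{1/d}\lesssim(t+\alpha)^{(1-d)/d}$, uniformly in $x$, which is the claim.

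I expect the main obstacle to be the handling of the kernel $\xi/|\xi|^d$: it is integrable at the origin but \emph{not} at infinity, so one cannot simply pass the uniform bound on $h$ through the integral. The interpolation between $\|h\|_\infty$ and $\|h\|_1$ is therefore forced, and it is exactly this step that produces the suboptimal exponent $(1-d)/d$ rather than the $(t+\alpha)^{-1}$ one would obtain from $\|h\|_\infty$ alone; improving it would require decay of $\|h\|_1$ (equivalently, of $\|F(t)-F_\infty\|_1$), as noted in the remark following Theorem~\ref{T1}. A secondary point to watch is that the velocity-shift estimate genuinely needs $p>d+1$, so that the weight $\langle w\rangle^{-p}$ renders $|w|$ integrable.
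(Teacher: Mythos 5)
Your proposal is correct and follows essentially the same route as the paper: the same representation of $(t+\alpha)^{d-1}E(t,x)$ as a velocity convolution of the kernel $\xi/|\xi|^d$ against $\int g\left(t,w,\cdot-\tfrac{w}{t+\alpha}\right)dw$, the same decomposition obtained by inserting $F(t,\cdot)$ (an $F-F_\infty$ piece controlled by Lemma~\ref{Funif} and a velocity-shift piece controlled by $\mcGv(t)\lesssim 1$ from Lemma~\ref{DEf}, which is where $p>d+1$ enters), and the same $L^1$--$L^\infty$ interpolation against the singular kernel. The only organizational difference is that you merge the two pieces into a single error $h$ and interpolate once (inlining the near/far argument of Lemma~\ref{LE}), whereas the paper applies Lemma~\ref{LE} to each piece separately and keeps the worse of the two resulting rates; both yield the identical bound $(t+\alpha)^{(1-d)/d}$.
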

\begin{proof}
In order to properly decompose the difference of these quantities, we first represent the field in terms of the translated distribution function. In particular, we have
$$E(t,x) =  \frac{1}{d\omega_d}  \iint \frac{x- y}{\left | x - y \right |^d} \ g(t, y - u(t+\alpha), u) \ du dy,$$
which, upon performing the change of variables
$$\xi = \frac{x- y}{t+\alpha}$$
with respect to $y$
and
$$w = x - (u+\xi)(t+\alpha)$$
with respect to $u$, gives
$$E(t,x) = (t+\alpha)^{1-d}   \frac{1}{d\omega_d} \iint \frac{\xi}{|\xi|^d} \ g \left (t, w, \frac{x-w}{t+\alpha} - \xi \right ) \ dwd\xi.$$
Therefore, due to the convolution structure of $E_\infty$ we have
\begin{equation}
\label{Dform}
(t+\alpha)^{d-1} E(t,x) - E_\infty\left (\frac{x}{t+\alpha} \right) =   \frac{1}{d\omega_d} \int \frac{\xi}{|\xi|^d}  \left ( \int g \left (t, w, \frac{x-w}{t+\alpha} - \xi \right )  dw- F_\infty \left (\frac{x}{t+\alpha} - \xi \right) \right )\ d\xi.
\end{equation}
Next, we split the $\xi$-integrand so that
$$ \int g \left (t, w, \frac{x-w}{t+\alpha} - \xi \right ) \ dw - F_\infty \left ( \frac{x}{t+\alpha} - \xi \right) = \mcA_1\left (t,\frac{x}{t+\alpha}-\xi \right) + \mcA_2\left (t,\frac{x}{t+\alpha}-\xi \right)$$
where
\begin{equation}
\label{G1}
\mcA_1(t,v) =    \left (\int g(t,w, v) dw - F_\infty(v) \right ) = F(t,v) - F_\infty(v)
\end{equation}
and
\begin{equation}
\label{G2}
\mcA_2(t,v) =  \int \left ( g \left (t, w, v  - \frac{w}{t+\alpha} \right ) -g(t,w, v) \right )dw.
\end{equation}
Using this decomposition in \eqref{Dform}, we have
$$\left |(t+\alpha)^{d-1} E(t,x) - E_\infty\left (\frac{x}{t+\alpha} \right) \right | \leq  \left \| \nabla_v (\Delta_v)^{-1} \mcA_1(t) \right \|_\infty + \left \| \nabla_v (\Delta_v)^{-1} \mcA_2(t) \right \|_\infty.$$
To estimate the convolution terms on the right side of the inequality, we will use Lemma \ref{LE}.

Now, to estimate the $\mcA_1$ term we find
$$\|\mcA_1(t) \|_1 = \| F(t) - F_\infty \|_1 \leq 2\mcM \lesssim 1$$
and of course
$$\|\mcA_1(t) \|_\infty =  \| F(t) - F_\infty \|_\infty \lesssim (t+\alpha)^{2-d}.$$
Using these estimates with Lemma \ref{LE} yields
\begin{equation}
\label{I}
\| \nabla_v (\Delta_v)^{-1} \mcA_1(t) \|_\infty \lesssim \| F(t) - F_\infty \|_\infty \lesssim (t+\alpha)^{\frac{(d-1)(2-d)}{d}}.
\end{equation}

To control the $\mcA_2$ term, we use the bound on spatial moments and velocity derivatives of $g$, which yields
\begin{eqnarray*}
\| \mcA_2(t) \|_\infty 
& = & \sup_{v \in \bfR^d} \left |   \int \left [g \left (t, w, v  - \frac{w}{t+\alpha} \right ) -  g(t,w, v) \right ] \ dw \right |\\
& \lesssim & \sup_{v \in \bfR^d}  \int \left |\int_0^1 \frac{d}{d\theta} \left (g \left (t, w, v - \theta \frac{w}{t+\alpha} \right )  \right )  d\theta \right |  dw\\
& \lesssim & (t+\alpha)^{-1} \sup_{v \in \bfR^d}  \int_0^1 \int |w| \left |\nabla_v g \left (t, w, v - \theta \frac{w}{t+\alpha} \right ) \right |  dw d\theta\\
& \lesssim & (t+\alpha)^{-1}  \mcGv(t) \left (\int \langle w \rangle^{1-p}  dw \right )\\
& \lesssim & (t+\alpha)^{-1}.
\end{eqnarray*}
In order to estimate $\| \mcA_2(t,x) \|_1$, we merely use charge conservation so that
$$\| \mcA_2(t) \|_1 \leq  \iint \left | g \left (t, w, v - \frac{w}{t+\alpha} \right ) - g(t,w, v) \right | dw dv \leq 2\mcM \lesssim 1.$$
Combining this with the bound on $\| \mcA_2(t) \|_\infty$ within Lemma \ref{LE} gives
\begin{equation}
\label{II}
\| \nabla_v (\Delta_v)^{-1} \mcA_2(t) \|_\infty \lesssim (t+\alpha)^\frac{1-d}{d}.
\end{equation}
Finally, collecting \eqref{I} with $d \geq 4$ and \eqref{II}
we conclude
$$\sup_{x \in \bfR^d} \left |(t+\alpha)^{d-1} E(t,x) - E_\infty\left (\frac{x}{t+\alpha} \right) \right | \lesssim (t+\alpha)^\frac{1-d}{d}.$$ 
\end{proof}

Next, we estimate the charge density using the same tools.

\begin{lemma}
\label{LDensity}
We have
$$\sup_{x \in \bfR^d} \left | (t+\alpha)^d \rho(t,x) - F_\infty \left (\frac{x}{t + \alpha} \right) \right | \lesssim (t+\alpha)^{-1}.$$
\end{lemma}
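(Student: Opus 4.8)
The plan is to mirror the proof of Lemma \ref{LField}, but to work directly with the spatial average rather than its Riesz-type potential, which in fact makes the argument simpler. Starting from the representation of the charge density in \eqref{VPg}, namely
$$(t+\alpha)^d \rho(t,x) = \int g\left(t, w, \frac{x-w}{t+\alpha}\right) \, dw,$$
I would introduce the self-similar variable $v = \frac{x}{t+\alpha}$ so that $\frac{x-w}{t+\alpha} = v - \frac{w}{t+\alpha}$. Since the map $x \mapsto \frac{x}{t+\alpha}$ is a bijection of $\bfR^d$ for $t+\alpha > 0$, the supremum over $x \in \bfR^d$ corresponds precisely to the supremum over $v \in \bfR^d$, and the quantity to be estimated becomes
$$\sup_{v \in \bfR^d} \left| \int g\left(t, w, v - \frac{w}{t+\alpha}\right) dw - F_\infty(v) \right|.$$

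Next, I would insert and subtract $\int g(t,w,v)\, dw = F(t,v)$ to recover exactly the decomposition used in Lemma \ref{LField}, writing the integrand as $\mcA_1(t,v) + \mcA_2(t,v)$ with $\mcA_1$ and $\mcA_2$ as defined in \eqref{G1} and \eqref{G2}. The crucial point is that here, unlike in the field estimate, no convolution against the singular kernel $\nabla_v(\Delta_v)^{-1}$ is present, so the raw $L^\infty$ bounds on $\mcA_1$ and $\mcA_2$ may be used directly, without any interpolation via Lemma \ref{LE}.

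Both of those bounds are already in hand: Lemma \ref{Funif} gives $\|\mcA_1(t)\|_\infty = \|F(t) - F_\infty\|_\infty \lesssim (t+\alpha)^{2-d}$, while the computation in the proof of Lemma \ref{LField} --- which leans on the uniform moment bound $\mcG(t) \lesssim 1$ and the velocity-derivative bound $\mcGv(t) \lesssim 1$ from Lemmas \ref{momrho} and \ref{DEf} --- yields $\|\mcA_2(t)\|_\infty \lesssim (t+\alpha)^{-1}$. Summing these and using $d \geq 4$ to absorb $(t+\alpha)^{2-d} \lesssim (t+\alpha)^{-1}$ then gives the claimed rate. I expect essentially no obstacle beyond correctly identifying the decomposition: the genuine analytic work, namely controlling the velocity-difference term $\mcA_2$ through $\nabla_v g$, was already carried out in Lemma \ref{LField}, and the absence of the potential operator is exactly why the full rate $(t+\alpha)^{-1}$ is recovered here without the loss of $(t+\alpha)^{(1-d)/d}$ that appeared in the field estimate.
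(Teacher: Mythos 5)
Your proposal is correct and coincides with the paper's own argument: the paper rewrites $(t+\alpha)^d\rho(t,x)$ via the same change of variables and splits the difference into exactly the terms $\mcA_2$ (estimated by the mean-value/$\mcGv$ argument, giving $(t+\alpha)^{-1}$) and $\mcA_1 = F - F_\infty$ (estimated by Lemma \ref{Funif}, giving $(t+\alpha)^{2-d}$), evaluated at $v = x/(t+\alpha)$. Your observation that the absence of the Riesz potential is what recovers the full rate $(t+\alpha)^{-1}$ is also exactly right.
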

\begin{proof}
As for the field, we must rewrite this difference in terms of the translated distribution function. To this end, we have
$$\rho(t,x) =    \int g (t, x - u(t+\alpha), u) \ du,$$
and, upon performing the change of variables
$$y = x-u(t+\alpha)$$
with respect to $u$, we find 
$$\rho(t,x) = (t+\alpha)^{-d}    \int g \left (t, y, \frac{x-y}{t+\alpha} \right ) \ dy.$$
Hence, the difference of the densities can be split into two terms as
\begin{eqnarray*}
\left | (t+\alpha)^d \rho(t,x) - F_\infty \left (\frac{x}{t+\alpha} \right) \right | &\leq & \left |    \int \left [g \left (t, y, \frac{x-y}{t+\alpha} \right ) - g\left (t, y, \frac{x}{t+\alpha} \right ) \right]\ dy \right | \\ 
& \ &  + \left | F \left (t, \frac{x}{t+\alpha} \right) - F_\infty \left (\frac{x}{t+\alpha} \right ) \right|\\
& =: & I + II.
\end{eqnarray*}

Using methods similar to the previous lemma, the first term satisfies
$$I \lesssim (t+\alpha)^{-1} \int_0^1 \int |y| \left | \nabla_v g\left (t, y, \frac{x-\theta y}{t+\alpha} \right ) \right | dy d\theta \lesssim (t+\alpha)^{-1} \mcGv(t) \left (\int \langle y \rangle^{1-p} \ dy \right ) \lesssim (t+\alpha)^{-1},$$
while the second term is straightforward, namely
$$II \leq 
\| F(t) - F_\infty \|_\infty \lesssim (t+\alpha)^{2-d}.$$
Combining these estimates and using $d \geq 4$ then yields the stated result.
\end{proof}

Finally, we estimate the current density in a similar fashion, but restricted to spatial subsets with linear growth in $t$.
\begin{lemma}
\label{LCurrent}
We have
$$\sup_{|x| \lesssim t+\alpha} \left | (t+\alpha)^d j(t,x) - \frac{x}{t+\alpha} F_\infty\left ( \frac{x}{t+\alpha} \right) \right | \lesssim (t+\alpha)^{-1}.$$
\end{lemma}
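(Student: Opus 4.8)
The plan is to mirror the proof of Lemma~\ref{LDensity}, reducing the current density to the already-established density estimate plus a harmless moment term. First I would write $j(t,x) = \int v\, f(t,x,v)\,dv = \int v\, g(t,x-v(t+\alpha),v)\,dv$ and perform the change of variables $y = x - v(t+\alpha)$, so that $v = \frac{x-y}{t+\alpha}$ and $dv = (t+\alpha)^{-d}\,dy$. This gives
\[
(t+\alpha)^d j(t,x) = \int \frac{x-y}{t+\alpha}\, g\!\left(t,y,\frac{x-y}{t+\alpha}\right)dy.
\]
The key algebraic observation is that the velocity weight factors as $\frac{x-y}{t+\alpha} = \frac{x}{t+\alpha} - \frac{y}{t+\alpha}$, which lets me split off the leading term proportional to $\frac{x}{t+\alpha}$.

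Next I would decompose the difference as
\[
(t+\alpha)^d j(t,x) - \frac{x}{t+\alpha}F_\infty\!\left(\frac{x}{t+\alpha}\right) = \frac{x}{t+\alpha}\left[(t+\alpha)^d\rho(t,x) - F_\infty\!\left(\frac{x}{t+\alpha}\right)\right] - \frac{1}{t+\alpha}\int y\, g\!\left(t,y,\frac{x-y}{t+\alpha}\right)dy,
\]
using that $(t+\alpha)^d \rho(t,x) = \int g(t,y,\frac{x-y}{t+\alpha})\,dy$ exactly as in Lemma~\ref{LDensity}. The first term is then controlled by combining Lemma~\ref{LDensity}, which bounds the bracket by $(t+\alpha)^{-1}$, with the restriction $|x| \lesssim t+\alpha$, which keeps the prefactor $\left|\frac{x}{t+\alpha}\right| \lesssim 1$. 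The second term I would estimate directly: since the bound $\mcG(t)\lesssim 1$ from Lemma~\ref{momrho} gives $|g(t,y,\xi)| \leq \langle y\rangle^{-p}\mcG(t)$ uniformly in the velocity argument $\xi$, the spatial moment satisfies $\left|\int y\, g(t,y,\frac{x-y}{t+\alpha})\,dy\right| \lesssim \int \langle y\rangle^{1-p}\,dy \lesssim 1$, the integral converging precisely because $p > d+1$, so this contributes $\lesssim (t+\alpha)^{-1}$.

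The argument is essentially routine once the two prior lemmas are in hand, so there is no serious analytic obstacle. The points that require care are structural rather than technical: recognizing that the leading piece reduces \emph{exactly} to the density quantity of Lemma~\ref{LDensity}, rather than to something merely comparable; noting that the spatial restriction $|x|\lesssim t+\alpha$ is precisely what is needed to absorb the unbounded prefactor $\frac{x}{t+\alpha}$, and is the reason this lemma, unlike Lemmas~\ref{LField} and~\ref{LDensity}, cannot hold uniformly on all of $\bfR^d$; and observing that in the second term the velocity argument of $g$ depends on $x$, so the moment bound must be taken with a supremum over that argument, which the pointwise estimate $\langle y\rangle^{p}|g| \leq \mcG(t)$ supplies. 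Collecting the two $(t+\alpha)^{-1}$ contributions yields the claim.
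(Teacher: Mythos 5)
Your proof is correct and follows the same basic route as the paper's: the same change of variables $y = x - v(t+\alpha)$, and the same splitting off of the spatial-moment term $\frac{1}{t+\alpha}\int y\, g\bigl(t,y,\frac{x-y}{t+\alpha}\bigr)\,dy$, estimated via $\mcG(t)\lesssim 1$ (Lemma \ref{momrho}) and the convergence of $\int \langle y\rangle^{1-p}\,dy$ for $p>d+1$. The one structural difference is in the leading term: you factor it exactly as $\frac{x}{t+\alpha}\bigl[(t+\alpha)^d\rho(t,x) - F_\infty\bigl(\frac{x}{t+\alpha}\bigr)\bigr]$ and invoke Lemma \ref{LDensity} as a black box, whereas the paper re-expands this piece into two further terms --- the velocity-shift term $\frac{x}{t+\alpha}\int\bigl[g\bigl(t,y,\frac{x-y}{t+\alpha}\bigr) - g\bigl(t,y,\frac{x}{t+\alpha}\bigr)\bigr]\,dy$, controlled by $\mcGv(t)\lesssim 1$, and the term $\frac{x}{t+\alpha}\bigl(F\bigl(t,\frac{x}{t+\alpha}\bigr) - F_\infty\bigl(\frac{x}{t+\alpha}\bigr)\bigr)$, controlled by Lemma \ref{Funif} --- in effect repeating the proof of Lemma \ref{LDensity} inline with the extra prefactor. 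Your version is shorter and makes the logical dependence cleaner (the current estimate is exactly the density estimate plus a decaying moment term), while the paper's version keeps the argument self-contained at the level of the primitive bounds on $\mcG$, $\mcGv$, and $\|F(t)-F_\infty\|_\infty$; both exploit the restriction $|x|\lesssim t+\alpha$ in precisely the same way, to absorb the unbounded prefactor $\frac{x}{t+\alpha}$, and both yield the stated $(t+\alpha)^{-1}$ rate.
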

\begin{proof}
Throughout, we consider only $|x| \lesssim t + \alpha$.
Performing the same change of variables as in Lemma \ref{LDensity} transforms $j(t,x)$ into
$$j(t,x) = (t+\alpha)^{-d}    \int \left ( \frac{x-y}{t+\alpha} \right ) g \left (t, y, \frac{x-y}{t+\alpha} \right ) \ dy.$$
Hence, the difference can be split into three terms as
\begin{eqnarray*}
\left | (t+\alpha)^d j(t,x) - \frac{x}{t+\alpha}F_\infty\left ( \frac{x}{t+\alpha} \right) \right | &\leq & \left |   \int \frac{y}{t+\alpha}g \left (t, y, \frac{x-y}{t+\alpha} \right ) \ dy \right |\\
& \ & +  \left |    \int \frac{x}{t+\alpha} \left [ g \left (t, y, \frac{x-y}{t+\alpha} \right )  - g \left ( t, y, \frac{x}{t+\alpha} \right) \right ] \ dy\right | \\
& \ & + \left | \frac{x}{t+\alpha} \left (F \left (t,\frac{x}{t+\alpha} \right) - F_\infty \left (\frac{x}{t+\alpha} \right) \right ) \right|\\
& =: & I + II + III.
\end{eqnarray*}

The first term is estimated using $\mcG(t)$ so that
$$I \lesssim (t+\alpha)^{-1} \mcG(t)  \left (\int \langle y \rangle^{1-p} \ dy \right ) \lesssim (t+\alpha)^{-1} .$$
The second term has similar structure, but involves velocity derivatives of $g$, and we find
$$II\lesssim |x| (t+\alpha)^{-2}  \mcGv(t) \left ( \int \langle y \rangle^{1-p} dy \right ) \lesssim (t+\alpha)^{-1}.$$
Finally, the third term is straightforward and yields
$$III \lesssim |x| (t+\alpha)^{-1} \| F(t) - F_\infty \|_\infty \lesssim (t+\alpha)^{2-d} .$$
Combining these estimates then yields the stated result.
\end{proof}

\subsection{Scattering of the distribution function}
\label{sect:modscatt}

With the field and derivative estimates solidified, we prove that the distribution functions scatter to a limiting value as $t \to \infty$ along their free-streaming trajectories in phase space.

\begin{lemma}
\label{LScattering}
There exists a continuous $f_\infty \in L^1(\bfR^{2d}) \cap L^\infty(\bfR^{2d})$ such that
$$g(t,x,v) \to f_\infty(x,v)$$ 
uniformly as $t \to \infty$.
In particular, we have
$$\sup_{(x,v) \in \bfR^{2d}} \left |f \left (t,x + v(t+\alpha), v \right ) - f_\infty(x,v) \right |  \lesssim (t+\alpha)^{3-d}.$$
\end{lemma}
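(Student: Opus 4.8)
The plan is to show that the family $g(t,\cdot,\cdot)$ is uniformly Cauchy in $L^\infty(\bfR^{2d})$ as $t \to \infty$ by proving that $\|\partial_t g(t)\|_\infty$ is integrable in time. Since $g$ solves the transport equation in \eqref{VPg}, at a fixed point $(x,v)$ I would solve directly for the time derivative,
$$\partial_t g(t,x,v) = (t+\alpha) E(t,x+v(t+\alpha)) \cdot \nabla_x g(t,x,v) - E(t,x+v(t+\alpha)) \cdot \nabla_v g(t,x,v),$$
and bound each term with estimates already in hand: the field decay $\|E(t)\|_\infty \lesssim (t+\alpha)^{1-d}$ from Theorem \ref{T0}, together with $\mcGx(t), \mcGv(t) \lesssim 1$ from Lemma \ref{DEf}, which give $|\nabla_x g|, |\nabla_v g| \lesssim \langle x\rangle^{-p}$. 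The first term dominates, yielding
$$|\partial_t g(t,x,v)| \lesssim (t+\alpha)^{2-d} \langle x \rangle^{-p} \lesssim (t+\alpha)^{2-d}$$
uniformly in $(x,v)$.

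Next, because $d \geq 4$ the exponent satisfies $2-d \leq -2 < -1$, so $(t+\alpha)^{2-d}$ is integrable on $[0,\infty)$. Hence for $s \geq t$,
$$\|g(t) - g(s)\|_\infty \leq \int_t^s \|\partial_\tau g(\tau)\|_\infty \, d\tau \lesssim \int_t^\infty (\tau+\alpha)^{2-d}\, d\tau \lesssim (t+\alpha)^{3-d},$$
which shows the family is uniformly Cauchy. Letting $s \to \infty$ produces a limit $f_\infty$ with $\|g(t) - f_\infty\|_\infty \lesssim (t+\alpha)^{3-d}$; recalling $g(t,x,v) = f(t,x+v(t+\alpha),v)$ then gives exactly the stated convergence estimate.

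Finally, I would record the regularity and integrability of $f_\infty$. Continuity follows since $f_\infty$ is a uniform limit of the continuous functions $g(t,\cdot,\cdot)$. The bound $\|f_\infty\|_\infty \lesssim 1$ follows from $\mcG(t) \lesssim 1$ (Lemma \ref{momrho}) together with the convergence estimate, while membership in $L^1(\bfR^{2d})$ and control of its integral come from the conservation identity $\iint g(t,x,v)\,dx\,dv = \mcM$ (the map $x \mapsto x + v(t+\alpha)$ is measure-preserving), passing to the limit via Fatou's lemma to obtain $0 \leq \int f_\infty \leq \mcM$. I do not expect a serious obstacle here: the argument is bookkeeping on the already-established decay of $E$ and the uniform bounds on $\nabla_x g, \nabla_v g$. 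The one point needing care is that convergence be uniform in both $x$ and $v$, which is automatic since the bound on $\partial_t g$ is uniform in $v$ and decays in $x$ through the $\langle x\rangle^{-p}$ weight. The only genuinely dimension-dependent input is that $d \geq 4$ makes the time integral of $(t+\alpha)^{2-d}$ converge, which is precisely what forces scattering without trajectory modification in high dimension.
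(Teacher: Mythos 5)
Your proposal is correct and follows essentially the same argument as the paper: bound $\|\partial_t g(t)\|_\infty \lesssim (t+\alpha)^{2-d}$ using the field decay together with the uniform bounds $\mcGx(t), \mcGv(t) \lesssim 1$ from Lemma \ref{DEf}, integrate in time to get the uniform Cauchy property and the rate $(t+\alpha)^{3-d}$, and deduce continuity, boundedness, and integrability of $f_\infty$ from the uniform limit and charge conservation. The only cosmetic difference is that you invoke Fatou's lemma explicitly for the $L^1$ claim, which the paper leaves implicit.
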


\begin{proof}
Because $g$ satisfies \eqref{VPg}, we have
$$\partial_t g =  (t+\alpha) E(t, x+v(t+\alpha)) \cdot \nabla_x g(t,x,v) -   E(t, x+v(t+\alpha))\cdot \nabla_v g(t,x,v).$$
Similar to the proof of Lemma \ref{Funif}, we wish to show that 
$\displaystyle \| \partial_t g(t) \|_\infty$ is integrable in order to establish the existence of a limiting function in this norm.

To this end, we merely estimate the terms on the right side of the equation.
Using the field decay and Lemma \ref{DEf}, we find
$$\left |(t+\alpha) E(t, x+v(t+\alpha)) \nabla_x g(t,x,v) \right | \lesssim (t+\alpha) \|E(t) \|_\infty \mcGx(t) \lesssim (t+\alpha)^{2-d}$$
and
$$\left | E(t, x+v(t+\alpha)) \nabla_v g(t,x,v) \right | \lesssim \|E(t) \|_\infty \mcGv(t) \lesssim (t+\alpha)^{1-d}.$$
Combining yields
$$ \|\partial_t g(t) \|_\infty \lesssim (t+\alpha)^{2-d}.$$
As $d \geq 4$, this bound is integrable in time and there is $f_\infty \in C(\bfR^{2d})$ such that
$$\| g(t) - f_\infty \|_\infty \lesssim (t+\alpha)^{3-d}.$$
Similar to the spatial average, the limiting function is integrable as $$ \iint g(t,x,v) \ dv dx = \mcM $$
for all $ t\geq 0$, and bounded as $\|g(t) \|_\infty \leq \| f_0 \|_\infty$ for all $ t\geq 0$.

\end{proof}



With these lemmas firmly in place, Theorem \ref{T1} follows by merely collecting the stated estimates.

\section{Small moment solutions and proof of Theorem \ref{T3}}
\label{sect:T3}

In the final section, we establish the global-in-time existence of small data solutions.
We note that the smallness condition neither involves derivatives of $f_0$ nor restricts the value of $\|f_0\|_\infty$.

\begin{proof}[Proof of Theorem \ref{T3}]

Let $M_n(0) = \epsilon > 0$ for some $n > d(d-1)$ and denote the maximal time of existence by $T_{\mathrm{max}} > 0$. We will impose conditions on $\epsilon$ as we continue.
Let $$T_\infty = \sup \{ t \geq 0 : M_n(t) \leq 2\epsilon \}.$$
Notice that $T_\infty > 0$ by continuity.
Throughout, constants may depend upon $d$, $\alpha$, and $n$, but not on any other quantities.

Next, we employ Corollary \ref{EM} with $k=n$ to find
$$\| E(t) \|_\infty \leq C(t+\alpha)^{1-d} \ M_n(t)^\frac{(d-2)(d+1)}{d(n-2)} \leq C(t+\alpha)^{1-d} \ \epsilon^\frac{(d-2)(d+1)}{d(n-2)} $$
for all $t \in [0,T_\infty)$.
In particular, though the corollary (and Lemmas \ref{LE} and \ref{Lrho}, on which it depends) only state estimates for $t$ sufficiently large, the proofs hold for any $t \geq 0$ whenever $\alpha > 0$.
We use this field estimate along with Lemma \ref{ME} for $k=n$ so that
\begin{eqnarray*}
M_n(t)^\frac{1}{n-2} & \leq & M_n(0)^\frac{1}{n-2} + C\int_0^t (s+ \alpha) \| E(s) \|_\infty \ ds\\
& \leq & \epsilon^\frac{1}{n-2} + C\epsilon^\frac{(d-2)(d+1)}{d(n-2)} \int_0^\infty (s + \alpha)^{2-d} \ ds  \\
& \leq & \epsilon^\frac{1}{n-2} + C \left (\epsilon^\frac{(d-2)(d+1)}{d} \right )^\frac{1}{n-2}
\end{eqnarray*}
for $t \in [0,T_\infty)$.
For $d \geq 4$, we have
$$\frac{(d-2)(d+1)}{d} = \left ( 1- \frac{2}{d} \right ) (d+1) \geq \frac{5}{2}$$
and thus taking $\epsilon < 1$ implies
$$\left (\epsilon^\frac{(d-2)(d+1)}{d} \right )^\frac{1}{n-2} \leq \epsilon^\frac{2}{n-2}.$$
Therefore, we find
$$M_n(t)^\frac{1}{n-2} \leq  \epsilon^\frac{1}{n-2} + \left ( C  \epsilon^\frac{1}{n-2} \right ) \epsilon^\frac{1}{n-2} = \left ( 1 + C  \epsilon^\frac{1}{n-2} \right )  \epsilon^\frac{1}{n-2}, $$
and taking $\epsilon$ smaller if necessary yields
$$M_n(t) \leq \frac{3}{2} \epsilon < 2\epsilon$$
for all $t \in [0,T_\infty)$ and $\epsilon$ sufficiently small.
By the definition of $T_\infty$ it follows that $T_\infty = T_{\mathrm{max}}$, and this further yields
$$\|E(t) \|_\infty \leq C \epsilon^\frac{(d-2)(d+1)}{d(n-2)} (t+\alpha)^{1-d}$$
for the lifespan of the solution.
The field estimate then provides a uniform upper bound on moments of $g$, and $T_{\mathrm{max}} = \infty$ follows.
With the field decay established, Theorem \ref{T1} can be applied to provide the complete asymptotic behavior of solutions, as well.
\end{proof}



\end{document}